\documentclass[10pt,dvipdfmx]{article}

\usepackage{latexsym,amsfonts,amsmath,amssymb,mathrsfs,url,amsthm}
\usepackage[dvipdfmx]{color,graphicx}
\newtheorem{theorem}{Theorem}
\newtheorem{lemma}{Lemma}

\newtheorem{remark}{Remark}
\newtheorem{proposition}{Proposition}
\newtheorem{definition}{Definition}
\newtheorem{corollary}{Corollary}
\usepackage[dvipdfmx,
linkbordercolor={1 0 0},
citebordercolor={0 1 0},
urlbordercolor={0 1 1}]{hyperref}

\newcommand{\rd}{\, \mathrm{d}}

\newcommand{\bszero}{\boldsymbol{0}}
\newcommand{\bsone}{\boldsymbol{1}}
\newcommand{\bsalpha}{\boldsymbol{\alpha}}
\newcommand{\bsdelta}{\boldsymbol{\delta}}
\newcommand{\bsc}{\boldsymbol{c}}
\newcommand{\bsd}{\boldsymbol{d}}
\newcommand{\bsk}{\boldsymbol{k}}
\newcommand{\bsl}{\boldsymbol{l}}
\newcommand{\bsr}{\boldsymbol{r}}
\newcommand{\bsq}{\boldsymbol{q}}
\newcommand{\bsx}{\boldsymbol{x}}
\newcommand{\bsy}{\boldsymbol{y}}
\newcommand{\Lcal}{\mathcal{L}}
\newcommand{\Dcal}{\mathcal{D}}
\newcommand{\Scal}{\mathcal{S}}

\newcommand{\wal}{{}_b\mathrm{wal}}
\newcommand{\wor}{\mathrm{wor}}
\newcommand{\EE}{\mathbb{E}}
\newcommand{\FF}{\mathbb{F}}
\newcommand{\NN}{\mathbb{N}}

\newcommand{\RR}{\mathbb{R}}
\newcommand{\VV}{\mathbb{V}}
\newcommand{\ZZ}{\mathbb{Z}}

\allowdisplaybreaks

\title{Recent advances in higher order quasi-Monte Carlo methods}
\author{Takashi Goda\thanks{School of Engineering, University of Tokyo, 7-3-1 Hongo, Bunkyo-ku, Tokyo 113-8656, Japan (\tt{goda@frcer.t.u-tokyo.ac.jp})}, Kosuke Suzuki\thanks{Graduate School of Science, Hiroshima University. 1-3-1 Kagamiyama, HigashiHiroshima, 739-8526, Japan. JSPS Research Fellow. (\tt{kosuke-suzuki@hiroshima-u.ac.jp})}}
\date{\today}

\begin{document}
\maketitle
\begin{abstract}
In this article we review some of recent results on higher order quasi-Monte Carlo (HoQMC) methods. After a seminal work by Dick (2007, 2008) who originally introduced the concept of HoQMC, there have been significant theoretical progresses on HoQMC in terms of discrepancy as well as multivariate numerical integration. Moreover, several successful and promising applications of HoQMC to partial differential equations with random coefficients and Bayesian estimation/inversion problems have been reported recently. In this article we start with standard quasi-Monte Carlo methods based on digital nets and sequences in the sense of Niederreiter, and then move onto their higher order version due to Dick. The Walsh analysis of smooth functions plays a crucial role in developing the theory of HoQMC, and the aim of this article is to provide a unified picture on how the Walsh analysis enables recent developments of HoQMC both for discrepancy and numerical integration.\\[5pt]
\textbf{Keywords:} Higher order quasi-Monte Carlo, digital nets and sequences, Walsh analysis, discrepancy, numerical integration\\[5pt]
\textbf{Mathematics Subject Classification:} 11K38, 41A55, 42C10, 65C05 (primary), 65D30, 65D32
\end{abstract}

\section{Introduction}\label{sec:intro}
For an integrable function $f\colon [0,1]^s\to \RR$, we denote the integral of $f$ by
\[ I(f) = \int_{[0,1]^s}f(\bsx) \rd \bsx. \]
Monte Carlo/Quasi-Monte Carlo (QMC) methods are a class of numerical algorithms for approximating $I(f)$ based on pointwise function evaluations. Let $P\subset [0,1]^s$ be a finite multiset, that is, if an element occurs multiple times, it is counted according to its multiplicity. Then $I(f)$ is approximated by
\[ I(f; P) = \frac{1}{|P|}\sum_{\bsx \in P}f(\bsx), \]
where $|P|$ denotes the cardinality of $P$. It is obvious that this algorithm is exact for any choice of $P$ if $f$ is a constant function, but except for such a trivial case, a careful design of $P$ and the accompanying theoretical analysis are required to show that the algorithm works well for various functions $f$.

One fairly easy but sensible approach is to choose each point $\bsx$ independently and uniformly from $[0,1]^s$. This is widely known under the name of Monte Carlo methods \cite{HHbook}. Many fundamental results in probability theory, including the law of large numbers and the central limit theorem, apply to this approach. Looking at $I(f;P)$ as a random variable (with $P$ being the underlying stochastic variable),  we have
\[ \EE[I(f; P)] = I(f)\quad \text{and}\quad \VV[I(f;P)] = \frac{\VV[f]}{|P|},\] 
for any function $f\in L_2([0,1]^s)$, where $\VV[f]$ on the right-hand side of the second equality denotes the variance of $f$. This means, Monte Carlo methods work for any square-integrable functions, but the approximation error converges only probabilistically at the notorious ``one over square root of $N$'' rate. Thus we have a trade-off between versatility and efficiency. 

In some applications where the Monte Carlo convergence is considered too slow, one needs to improve efficiency while discarding versatility of Monte Carlo methods to some extent. QMC methods are one of the standard choices for this purpose. The classical but still central result in QMC methods is the celebrated Koksma-Hlawka inequality:
\begin{align}\label{eq:KH-inequ}
 |I(f; P)-I(f)|\leq V_{\mathrm{HK}}(f) D^*(P), 
\end{align}
where $V_{\mathrm{HK}}(f)$ denotes the total variation of $f$ in the sense of Hardy and Krause, and $D^*(P)$ denotes the star-discrepancy of $P$ (we shall give a precise definition of $D^*(P)$ later in Section~\ref{sec:discrepancy}). Although a class of functions we can deal with is restricted to functions with bounded total variations (i.e., we discard versatility to some extent), through a clever design of $P$ such that $D^*(P)$ is of order better than $|P|^{-1/2}$, the convergence rate can be improved (i.e., we improve efficiency). In fact, there are many explicit constructions of so-called \emph{digital $(t,m,s)$-nets} and \emph{digital $(t,s)$-sequences} achieving the star-discrepancy of order $(\log N)^{s-1}/N$ and $(\log N)^s/N$, respectively,\footnote{To be precise, for an infinite sequence of points $\Scal$, this means that there exists a constant $C_s>0$ depending only on $s$ such that the star-discrepancy of the first $N$ elements of $\Scal$ is bounded by $C_s(\log N)^s/N$ uniformly for all $N$.} see \cite{Nbook,DPbook}. Hence, it follows from the Koksma-Hlawka inequality that the integration error decays faster than the ``one over square root of $N$'' rate.

One natural question in this line is then ``Can we improve efficiency further while sacrificing versatility to more extent?'' Higher order quasi-Monte Carlo (HoQMC) methods due to Dick \cite{Dic07,Dic08} provide an affirmative solution to this question. Now let us focus on functions $f$ having square-integrable partial mixed derivatives up to order $\alpha>1$ in each variable, which obviously means that we discard versatility to more extent than standard QMC methods. In return for this drawback, however, the order of convergence of the integration error can be improved to $(\log N)^{c(\alpha,s)}/N^{\alpha}$ with some exponent $c(\alpha, s)>0$ by employing so-called \emph{higher order digital nets and sequences} as quadrature nodes $P$.\footnote{For higher order digital sequences, this order of convergence does not hold uniformly for all $N$, but holds for a geometric spacing of $N$. It is known that this cannot be improved \cite{Owe16}.} Hence, when the considered integrand is smooth enough, HoQMC methods can be much more efficient than standard QMC methods, not to mention Monte Carlo methods. Of course, one may doubt if there is any chance of encountering with such smooth functions in practice. Fortunately, there have been several successful and promising applications of HoQMC methods reported already in the literature. These include \cite{DKLNS14,DKLS16,DLS16,GHS18a,GHS18b} on applications to partial differential equations with random coefficients (see also the review article \cite{KN16}), and \cite{DGLS17,GP18,DGLS19} on applications to Bayesian estimation/inversion problems. 

Recently there have been significant theoretical progresses on HoQMC methods. The first major step was made by Dick and Pillichshammer \cite{DP14}. They proved that order 5 digital sequences achieve the best possible order of $L_2$-discrepancy, which is $(\log N)^{s/2}/N$, uniformly for all $N$, and moreover, they proved that order 3 digital nets of $N$ points achieve the best possible order of $L_2$-discrepancy, which is $(\log N)^{(s-1)/2}/N$ (here again, we shall give a precise definition of $L_2$-discrepancy later in Section~\ref{sec:discrepancy}). Prior to their work, there had been only one explicit construction of finite point sets (for arbitrarily fixed dimension $s$) with the best possible order of $L_2$-discrepancy due to Chen and Skriganov \cite{CS02,Skr06}. Therefore, higher order digital nets (resp. sequences) are now recognized as the second (resp. first) explicit construction of optimal order $L_2$-discrepancy point sets (resp. sequences). More recently, several refined analyses for generalizing or extending the work of Dick and Pillichshammer have been conducted \cite{Dic14,M15,DHMP17a,DHMP17b,BM18}. 

Another major step was made in a series of papers \cite{GSY1,GSY2,GSY3}, where the authors refined the integration error analysis for smooth functions due to Dick \cite{Dic07,Dic08} and proved that order $(2\alpha+1)$ digital nets and sequences achieve the best possible order of the worst-case error for a reproducing kernel Hilbert space with dominating mixed smoothness $\alpha$, which is $(\log N)^{(s-1)/2}/N^{\alpha}$. Note that the original work by Dick \cite{Dic08} proves the worst-case error of order $(\log N)^{s \alpha}/N^{\alpha}$ for order $\alpha$ digital nets and sequences, see also \cite{BD09}. Other than higher order digital nets and sequences, only the Frolov lattice rule in conjunction with periodization of integrands has been proven to achieve the same, best possible order of the worst-case error so far \cite{Fro76,UU15,NUU17}.

There is a common source for obtaining the result of \cite{DP14} and that of \cite{GSY2,GSY3}, which is the Walsh analysis. To introduce the concept of HoQMC methods originally, Dick managed to prove the decay of the Walsh coefficients of smooth functions \cite{Dic07,Dic08}. In fact, his digit interlacing construction of higher order digital nets and sequences, which shall be described in Section~\ref{subsec:digit-interlacing}, is carefully designed to exploit the decay of the Walsh coefficients. In order to improve his seminal results, it may be sensible to attempt to exploit some further aspect of the Walsh coefficients. Both the result of \cite{DP14} and that of \cite{GSY2,GSY3} rely not only on the decay but also on the sparsity of the Walsh coefficients. 

In this article we mainly focus on the papers \cite{DP14,GSY3} and provide a unified picture on how the Walsh analysis enables recent developments of HoQMC methods both for discrepancy and numerical integration. The rest of this article is organized as follows. In Section~\ref{sec:QMC}, we explain about standard QMC methods based on digital nets and sequences in the sense of Niederreiter \cite{Nbook}. Although integer lattices are another important class of QMC point sets, see for instance \cite{SJ94} and \cite[Section~5]{DKS13}, we do not cover them in this article. In Section~\ref{sec:HoQMC}, we introduce the definitions of higher order digital nets and sequences, and provide an explicit construction algorithm due to Dick \cite{Dic08}. In Section~\ref{sec:Walsh}, we introduce the definition of the Walsh functions and give some key connection to digital nets. Thereafter, recent advances in HoQMC methods for discrepancy are described in Section~\ref{sec:discrepancy}, while those for numerical integration are in Section~\ref{sec:integration}. We shall highlight an analogy between the approach by \cite{DP14} and that of \cite{GSY3}, where exploiting both the decay and the sparsity of the Walsh coefficients plays a crucial role. We conclude the article with some future research directions.\\[5pt]
\indent \textbf{Notation.} Throughout this article, we shall use the following notation. Let $\NN$ be the set of positive integers and we write $\NN_0=\NN\cup \{0\}$. For a prime $b$, let $\FF_b$ be the finite field with $b$ elements and we identify $\FF_b$ with the set of integers $\{0,1,\ldots,b-1\}$ equipped with addition and multiplication modulo $b$. For $x\in [0,1]$, its $b$-adic expansion $x=\sum_{i=1}^{\infty}\xi_ib^{-i}$ with $\xi_i\in \FF_b$ is understood to be unique in the sense that infinitely many of the $\xi_i$'s are different from $b-1$ if $x\neq 1$ and that all of the $\xi_i$'s are equal to $b-1$. Note that for $k=1\in \NN$ we use the $b$-adic expansion $1b^0$, whereas for $x=1\in [0,1]$ we use $(b-1)b^{-1}+(b-1)b^{-2}+\cdots$. It will be clear from the context which expansion we use. The operator $\oplus$ denotes the digitwise addition modulo $b$, that is, for $x,y\in [0,1]$ with $b$-adic expansions given by $x=\sum_{i=1}^{\infty}\xi_ib^{-i}$ and $y=\sum_{i=1}^{\infty}\eta_ib^{-i}$, respectively, $\oplus$ is defined as
\[ x\oplus y = \sum_{i=1}^{\infty}\zeta_ib^{-i}, \quad \text{where $\zeta_i=\xi_i+\eta_i \pmod b$}. \]
Similarly we use $\oplus$ for digitwise addition for non-negative integers based on the $b$-adic expansions. In case of vectors in $[0,1]^s$ or $\NN_0^s$, the operator $\oplus$ is applied componentwise.

\section{Standard quasi-Monte Carlo}\label{sec:QMC}

\subsection{Digital nets and sequences}
We start with a general construction scheme for a class of QMC point sets called \emph{digital nets} due to Niederreiter \cite{Nbook}. Note that both of digital $(t,m,s)$-nets and higher order digital nets can be regarded as special subclasses of them.
\begin{definition}[Digital nets]
Let $m,n\in \NN$ and let $C_1,\ldots,C_s$ be $n\times m$ matrices over $\FF_b$. For an integer $0\leq h<b^m$ with $b$-adic expansion $h = \eta_0 + \eta_1 b+\cdots + \eta_{m-1}b^{m-1}$, define the point $\bsx_h=(x_{h,1},\ldots,x_{h,s})\in [0,1]^s$ by
\[ x_{h,j} = \frac{\xi_{1,h,j}}{b}+\frac{\xi_{2,h,j}}{b^2}+\cdots + \frac{\xi_{n,h,j}}{b^n}, \]
where
\[ (\xi_{1,h,j},\xi_{2,h,j},\ldots, \xi_{n,h,j}) = (\eta_0,\eta_1,\ldots,\eta_{m-1})\cdot C_j^{\top}. \]
The set $P=\{ \bsx_h \mid 0\leq h<b^m\}\subset [0,1]^s$ is called a digital net over $\FF_b$ (with generating matrices $C_1,\ldots,C_s$). 
\end{definition}
\noindent It is obvious from the definition that the parameter $m$ determines the total number of points, which is $b^m$, while the parameter $n$ determines the precision of points. We can extend this definition to construct infinite sequences of points called \emph{digital sequences}. Again, both of digital $(t,s)$-sequences and higher order digital sequences can be regarded as special subclasses of them.

\begin{definition}[Digital sequences]
Let $C_1,\ldots,C_s$ be $\NN\times \NN$ matrices over $\FF_b$. For an integer $h\in \NN_0$ with $b$-adic expansion $h = \eta_0 + \eta_1 b+\cdots$, where all but a finite number of $\eta_i$ are 0, define the point $\bsx_h=(x_{h,1},\ldots,x_{h,s})\in [0,1]^s$ by
\[ x_{h,j} = \frac{\xi_{1,h,j}}{b}+\frac{\xi_{2,h,j}}{b^2}+\cdots , \]
where
\[ (\xi_{1,h,j},\xi_{2,h,j},\ldots ) = (\eta_0,\eta_1,\ldots)\cdot C_j^{\top}. \]
The sequence of points $\Scal=\{ \bsx_h \mid h\in \NN_0\}\subset [0,1]^s$ is called a digital sequence over $\FF_b$ (with generating matrices $C_1,\ldots,C_s$). 
\end{definition}

\begin{remark}\label{rem:precision}
Assume that for each $1\leq j\leq s$ there exists a function $K_j\colon \NN\to \NN$ such that $C_j=(c_{k,l}^{(j)})_{k,l\in \NN}$ satisfies $c_{k,l}^{(j)}=0$ whenever $k>K_j(l)$. Then for any $m\in \NN$ the first $b^m$ elements of a digital sequence  over $\FF_b$ with generating matrices $C_1,\ldots,C_s$ can be identified with a digital net  over $\FF_b$ with generating matrices $C_1^{[n\times m]},\ldots,C_s^{[n\times m]}$ with the precision parameter
\[ n = \max_{1\leq j\leq s}\max_{1\leq l\leq m}K_j(l), \]
where $C_j^{[n\times m]}$ denotes the upper-left $n\times m$ submatrix of $C_j$.
\end{remark}

\subsection{Quality measure}\label{subsec:standard_def}
In order to generate point sets or sequences from the above construction scheme such that the star-discrepancy is small, we need to design generating matrices $C_1,\ldots,C_s$ properly. In this subsection, we introduce the widely-used quality measure called \emph{$t$-value}, which is based on the Niederreiter-Rosenbloom-Tsfasman (NRT) weight function \cite{Nie86,RT97}.

\begin{definition}[Dual nets]
Let $m,n\in \NN$ and let $P$ be a digital net over $\FF_b$ with generating matrices $C_1,\ldots,C_s\in \FF_b^{n\times m}$. The dual net of $P$, denoted by $P^{\perp}$, is defined by
\[ P^{\perp} := \left\{ \bsk=(k_1,\ldots,k_s)\in \NN_0^s \mid C_1^{\top}\nu_n(k_1)\oplus \cdots \oplus C_s^{\top}\nu_n(k_s)=\bszero \in \FF_b^m \right\}, \]
where 
\[ \nu_n(k)=(\kappa_0,\ldots,\kappa_{n-1})^{\top}\in \FF_b^n \]
for $k\in \NN_0$ with $b$-adic expansion $k=\kappa_0+\kappa_1 b+\cdots$, where all but a finite number of $\kappa_i$ are 0.
\end{definition}

\begin{remark}
Let $\Scal$ be a digital sequence over $\FF_b$ for which there exist functions $K_j\colon \NN\to \NN$ such that $C_j=(c_{k,l}^{(j)})_{k,l\in \NN}$ satisfies $c_{k,l}^{(j)}=0$ whenever $k>K_j(l)$. Then the dual net can be defined for the first $b^m$ elements of $\Scal$ for any $m\in \NN$, since they can be identified with a digital net as discussed in Remark~\ref{rem:precision}.
\end{remark}

\begin{definition}[NRT weight function]
For $k\in \NN$, we denote the $b$-adic expansion of $k$ by
\[ k=\kappa_1 b^{c_1-1}+\kappa_2 b^{c_2-1}+\cdots + \kappa_v b^{c_v-1} \]
with $\kappa_1,\ldots,\kappa_v\in \FF_b\setminus \{0\}$ and $c_1>\cdots >c_v>0$. Then the NRT weight function $\mu_1\colon \NN_0\to \NN_0$ is defined by $\mu_1(0)=0$ and $\mu_1(k) = c_1$. In case of vectors in $\NN_0^s$, we define
\[ \mu_1(k_1,\ldots,k_s) = \sum_{j=1}^{s}\mu_1(k_j). \]
\end{definition}

We are ready to introduce the definition of $t$-value.
\begin{definition}[$t$-value]
Let $m,n\in \NN$ and let $P$ be a digital net over $\FF_b$ with generating matrices $C_1,\ldots,C_s\in \FF_b^{n\times m}$. We write
\[ \mu_1(P^{\perp}) := \min_{\bsk\in P^{\perp}\setminus \{\bszero\}}\mu_1(\bsk). \]
Then the $t$-value of $P$ is defined by 
\[ t:=m - \mu_1(P^{\perp}) +1, \]
and $P$ is called a \emph{digital $(t,m,s)$-net} over $\FF_b$.
\end{definition}
\noindent This definition of $t$-value is based on the concept of duality theory of digital nets as originally studied in \cite{NP01}. There is another but equivalent definition of $t$-value: let $\rho$ be the largest integer such that, for any choice $d_1,\ldots,d_s\in \NN_0$ with $d_1+\cdots +d_s=\rho$,\\[5pt]
\indent the first $d_1$ row vectors of $C_1$\\
\indent the first $d_2$ row vectors of $C_2$\\
\indent $\vdots$\\
\indent the first $d_s$ row vectors of $C_s$\\[5pt]
are linearly independent over $\FF_b$. Then the $t$-value can be also defined by $m-\rho$.

Because of the linear independence of the row vectors of generating matrices, any digital $(t,m,s)$-net over $\FF_b$ has the following equi-distribution property: every $b$-adic elementary box of the form
\[ E = \prod_{j=1}^{s}\left[ \frac{a_j}{b^{c_j}}, \frac{a_j+1}{b^{c_j}}\right)\]
with $c_1,\ldots,c_s\geq 0$, $c_1+\cdots + c_s=m-t$ and $0\leq a_j<b^{c_j}$ for all $j$, whose volume is $b^{t-m} $, contains $b^t$ points exactly. Hence, as the $t$-value is smaller, digital nets are more equi-distributed over $[0,1]^s$. This is why the $t$-value works as a quality measure of digital nets.

For digital sequences, the $t$-value is defined as follows.
\begin{definition}
Let $\Scal$ be a digital sequence over $\FF_b$. $\Scal$ is called a \emph{digital $(t,s)$-sequence} over $\FF_b$ if there exists $t\in \NN_0$ such that the first $b^m$ points of $\Scal$ is a digital $(t,m,s)$-net for any $m\geq t$.
\end{definition}

The following result states that the star-discrepancy of digital $(t,m,s)$-nets and digital $(t,s)$-sequences are of order $(\log N)^{s-1}/N$ and $(\log N)^s/N$, respectively, as mentioned in the first section. We refer to \cite[Theorems~4.10 and 4.17]{Nbook} for the proof.
\begin{theorem}\label{thm:star-disc-bound} The following holds true:
\begin{enumerate}
\item Let $P$ be a digital $(t,m,s)$-net over $\FF_b$. There exists a constant $B^{(1)}_{s,b,t}$ such that the star-discrepancy of $P$ is bounded by
\[ D^*(P) \leq B^{(1)}_{s,b,t}\frac{m^{s-1}}{b^m}. \]
\item Let $\Scal=\{ \bsx_h \mid h\in \NN_0\}$ be a digital $(t,s)$-sequence over $\FF_b$. There exists a constant $B^{(2)}_{s,b,t}$ such that the star-discrepancy of the first $N$ points of $\Scal$ is bounded by
\[ D^*(\{\bsx_0,\ldots,\bsx_{N-1}\}) \leq B^{(2)}_{s,b,t}\frac{(\log N)^{s}}{N}, \]
for any $N\geq 2$.
\end{enumerate}
\end{theorem}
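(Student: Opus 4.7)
The plan is to leverage the equi-distribution property of digital nets stated right after the definition of $t$-value. For part (1), I would follow the approach from Niederreiter's book: fix an anchored box $B(\boldsymbol{a})=\prod_{j=1}^s[0,a_j)$ and truncate each $a_j$ to its first $m$ base-$b$ digits, obtaining $\hat a_j$ with $|a_j-\hat a_j|<b^{-m}$. Both the volume and the net point count change by at most $O(sb^{-m})$, so it suffices to bound the local discrepancy at the truncated anchor.

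Next, using the $b$-adic expansions $\hat a_j=\sum_{i=1}^m\alpha_{j,i}b^{-i}$, I would decompose each interval $[0,\hat a_j)$ disjointly into elementary $b$-adic intervals, and take products over $j$ to decompose $B(\hat{\boldsymbol{a}})$ into elementary boxes indexed by shapes $\bsc=(c_1,\ldots,c_s)\in\NN^s$. For any shape with $c_1+\cdots+c_s\leq m-t$, the $(t,m,s)$-net property ensures that each elementary box of that shape contains exactly $b^m$ times its volume many points, so its contribution to the local discrepancy vanishes identically. The residual error comes from shapes with $c_1+\cdots+c_s>m-t$; the key combinatorial step is to control it by exploiting both that the number of such shapes with $c_1+\cdots+c_s=k$ (restricted to $c_j\leq m$) is polynomial and peaks near the threshold, and that the per-box imbalance, which is at most $b^t$ in absolute value by inclusion in a balanced parent box, is tempered geometrically by $b^{-k}$ from the volume. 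A careful summation then yields $D^*(P)\leq B^{(1)}_{s,b,t}\,m^{s-1}/b^m$.

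For part (2), I would write $N$ in base $b$ as $N=\sum_{r=0}^{L}a_rb^r$ with $L=\lfloor\log_b N\rfloor$ and split the first $N$ indices of $\Scal$ into blocks according to the digit pattern. By the linearity of the digital-sequence construction, each block of $a_r b^r$ consecutive points decomposes into $a_r$ affine translates (via digitwise addition) of digital $(t,r,s)$-nets for $r\geq t$. The classical subadditivity $N\cdot D^*(\{\bsx_0,\ldots,\bsx_{N-1}\})\leq \sum_r a_r b^r\cdot D^*(\text{block}_r)$ combined with part (1) then gives $\sum_{r=t}^{L}a_r\cdot B^{(1)}_{s,b,t}\,r^{s-1}=O(L^s)$, yielding the claimed $(\log N)^s/N$ bound after dividing by $N$.

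The main obstacle is the combinatorial counting in part (1) needed to obtain the sharp exponent $s-1$ (rather than $s$) on the logarithmic factor: the naive bound that sums $\binom{k-1}{s-1}b^{t-k}$ over all residual $k$ loses a logarithmic factor, so one must cut the sum carefully at the threshold $c_1+\cdots+c_s=m-t+1$ and exploit the $b$-adic geometric tail. A secondary subtlety in part (2) is verifying that consecutive blocks of a digital sequence really are affine translates of digital nets, which follows from the left-shift structure of the generating matrices together with Remark on the precision parameter.
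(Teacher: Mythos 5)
First, note that the paper does not actually prove Theorem~\ref{thm:star-disc-bound}; it cites Theorems~4.10 and 4.17 of Niederreiter's book. Your proposal can therefore only be measured against that standard argument, whose overall architecture you reproduce correctly: truncate the anchor to $m$ digits, decompose the anchored box into disjoint elementary $b$-adic boxes, use the equi-distribution property for shapes with $c_1+\cdots+c_s\leq m-t$, and, for part (2), split the first $N$ points into digitally shifted $(t,r,s)$-nets and use the superadditivity $N D^*(\bigcup_i P_i)\leq \sum_i N_i D^*(P_i)$. Part (2) of your sketch is sound, including the subtlety you flag: a digitwise shift permutes the elementary boxes of each fixed shape, so it preserves the combinatorial $(t,r,s)$-net property, which is all that the proof of part (1) uses.

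The gap is in your resolution of the obstacle you yourself identify in part (1). You claim the residual shapes with $k=c_1+\cdots+c_s>m-t$ can be summed because the per-box imbalance is ``tempered geometrically by $b^{-k}$ from the volume.'' That is true only of the volume half of the imbalance. For a residual box $E$ of shape-sum $k$, the local discrepancy is $|A(E;P)/b^m-\vol(E)|$; while $\vol(E)=b^{-k}$ decays geometrically, the count $A(E;P)$ can be as large as $b^t$ for \emph{every} residual box, with no decay in $k$. Summing the resulting bound $b^{t-m}$ over the roughly $m^s-\binom{m-t}{s}\asymp m^s$ residual boxes still gives only $D^*(P)=O(m^s/b^m)$, i.e.\ you lose exactly the logarithmic factor you were trying to save. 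The mechanism that actually yields the exponent $s-1$ is aggregation (equivalently, Niederreiter's induction on $s$): for fixed $(c_1,\ldots,c_{s-1})$ and fixed digit choices $(\beta_1,\ldots,\beta_{s-1})$, the union over all residual choices of $(c_s,\beta_s)$ is contained in a single elementary box of shape-sum exactly $m-t$, hence contains at most $b^t$ points \emph{in total}, by disjointness of the decomposition. This caps the point-count contribution by $O(m^{s-1})\cdot b^{t-m}$ rather than $O(m^s)\cdot b^{t-m}$; without this grouping step (or the equivalent induction on dimension) your argument does not reach the stated bound.
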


We end this subsection by providing one useful result in analyzing the integration error of QMC rules using digital $(t,m,s)$-nets.
\begin{lemma}\label{lem:digital-net-card} Let $P$ be a digital $(t,m,s)$-net over $\FF_b$. The following holds true:
\begin{enumerate}
\item For $z\in \NN_0$, 
\begin{align*}
 \left| \left\{ \bsk\in P^{\perp}\setminus \{\bszero\}\mid \mu_1(\bsk)=z \right\} \right| \leq \begin{cases} 0 & \text{if $z<\mu_1(P^{\perp})$,} \\ b^{z-\mu_1(P^{\perp})+1}(z+1)^{s-1} & \text{otherwise.} \end{cases} 
\end{align*}
\item For any real $\lambda >1$,
\[ \sum_{\bsk \in P^\perp \setminus \{\bszero\}} b^{-\lambda \mu_1(\bsk)} \leq 2^{s-1}b^{\lambda}\frac{(\mu_1(P^\perp))^{s-1}}{b^{\lambda \mu_1(P^\perp)}} \sum_{z=1}^\infty b^{(1-\lambda)z} z^{s-1}. \]
\end{enumerate}
\end{lemma}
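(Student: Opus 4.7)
The plan is to handle Part~1 first by a counting argument organized around the composition $(c_1,\ldots,c_s)=(\mu_1(k_1),\ldots,\mu_1(k_s))$ of $z=\mu_1(\bsk)$. The case $z<\mu_1(P^{\perp})$ is immediate from the very definition of $\mu_1(P^{\perp})$, so suppose $z\geq \mu_1(P^{\perp})$. Fix $(c_1,\ldots,c_s)\in \NN_0^s$ with $c_1+\cdots+c_s=z$. Since $\mu_1(k_j)=c_j$ forces the leading nonzero digit of $k_j$ to sit at position $c_j$, expanding $C_j^{\top}\nu_n(k_j)$ shows that the condition $\bsk\in P^{\perp}$ is equivalent to a linear system $M\bsd=\bszero$ in $\FF_b^m$, where $\bsd\in\FF_b^z$ collects the (potentially nonzero) digits of all the $k_j$, and the $m\times z$ matrix $M$ has columns equal to the first $c_j$ rows of $C_j$ (transposed), for $j=1,\ldots,s$.

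The heart of Part~1 is a rank lower bound for $M$. By the equivalent row-independence formulation of the $t$-value recalled just above the lemma, any choice $0\leq d_j\leq c_j$ with $d_1+\cdots+d_s=m-t=\mu_1(P^{\perp})-1$ (which exists since $z\geq \mu_1(P^{\perp})$) yields $m-t$ linearly independent rows across the $C_j$'s. These form a subset of the columns of $M$, so $\mathrm{rank}(M)\geq \mu_1(P^{\perp})-1$, and the number of $\bsd\in\FF_b^z$ with $M\bsd=\bszero$ is at most $b^{z-\mu_1(P^{\perp})+1}$. This bounds the number of admissible $\bsk$ per composition, since dropping the ``leading digit nonzero'' constraint only enlarges the count. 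Finally, the number of compositions of $z$ into $s$ nonnegative parts is $\binom{z+s-1}{s-1}$, and a factor-by-factor comparison $(z+i)/i \leq z+1$ for $i\geq 1$ gives $\binom{z+s-1}{s-1}\leq (z+1)^{s-1}$; multiplying completes Part~1.

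For Part~2, I would split the series by the value of $z=\mu_1(\bsk)$, apply Part~1, and substitute $w=z-\mu_1(P^{\perp})+1$:
\[ \sum_{\bsk\in P^{\perp}\setminus\{\bszero\}} b^{-\lambda\mu_1(\bsk)} \leq b^{-\mu_1(P^{\perp})+1}\sum_{z\geq \mu_1(P^{\perp})} b^{(1-\lambda)z}(z+1)^{s-1} = \frac{b^{\lambda}}{b^{\lambda \mu_1(P^{\perp})}}\sum_{w\geq 1} b^{(1-\lambda)w}(w+\mu_1(P^{\perp}))^{s-1}. \]
Since $w\geq 1$ and $\mu_1(P^{\perp})\geq 1$ give $1/w+1/\mu_1(P^{\perp})\leq 2$, the elementary identity $(w+\mu_1(P^{\perp}))^{s-1}=(w\mu_1(P^{\perp}))^{s-1}(1/w+1/\mu_1(P^{\perp}))^{s-1}$ yields $(w+\mu_1(P^{\perp}))^{s-1}\leq 2^{s-1}w^{s-1}(\mu_1(P^{\perp}))^{s-1}$, which finishes Part~2.

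The main obstacle is the rank estimate in Part~1: one must carefully translate the duality-based definition of the $t$-value into the row-independence formulation and observe that any \emph{subcomposition} $d_j\leq c_j$ of total weight $\mu_1(P^{\perp})-1$ exhibits the required independent columns inside $M$. Beyond that the argument reduces to routine indexing bookkeeping and the two elementary inequalities above.
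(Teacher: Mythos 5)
Your proof is correct and follows essentially the same route as the paper: decompose by the composition $(\mu_1(k_1),\ldots,\mu_1(k_s))$ of $z$, bound the count per composition by $b^{z-\mu_1(P^{\perp})+1}$, multiply by the number of compositions $\binom{z+s-1}{s-1}\leq(z+1)^{s-1}$, and for Part~2 shift the summation index and use $(w+\mu_1(P^{\perp}))^{s-1}\leq 2^{s-1}w^{s-1}(\mu_1(P^{\perp}))^{s-1}$. The only difference is that where the paper cites Skriganov's Lemma~2.2 for the per-composition bound, you prove it directly via the rank estimate coming from the row-independence formulation of the $t$-value, which is a valid, self-contained substitute.
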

\begin{proof}
In this proof we put $A_z = \left| \left\{ \bsk\in P^{\perp}\setminus \{\bszero\}\mid \mu_1(\bsk)=z \right\} \right|$. It holds that
\[ A_z = \sum_{\substack{z_1,\ldots,z_s\in \NN_0\\ z_1+\cdots+z_s=z}}\left| \left\{ \bsk\in P^{\perp}\setminus \{\bszero\}\mid \mu_1(k_1)=z_1,\ldots,\mu_1(k_s)=z_s \right\} \right| . \]
Following \cite[Lemma~2.2]{Skr06}, the summand is bounded above by
\begin{align*}
\left| \left\{ \bsk\in P^{\perp}\setminus \{\bszero\}\mid \mu_1(k_1)=z_1,\ldots,\mu_1(k_s)=z_s \right\} \right| \leq \begin{cases} 0 & \text{if $z<\mu_1(P^{\perp})$,} \\ b^{z-\mu_1(P^{\perp})+1} & \text{otherwise.} \end{cases}
\end{align*}
Thus we have $A_z=0$ if $z<\mu_1(P^{\perp})$, since each summand is 0. For $z\geq \mu_1(P^{\perp})$, this bound gives
\begin{align*}
A_z & \leq  b^{z-\mu_1(P^{\perp})+1} \sum_{\substack{z_1,\ldots,z_s\in \NN_0\\ z_1+\cdots+z_s=z}}1
 = b^{z-\mu_1(P^{\perp})+1}\binom{z+s-1}{s-1} \\
& = b^{z-\mu_1(P^{\perp})+1}\prod_{j=1}^{s-1}\frac{z+j}{j} 
 \leq b^{z-\mu_1(P^{\perp})+1}(z+1)^{s-1},
\end{align*}
which proves the first assertion of the lemma.

Using the result of the first assertion and then applying the change of variables $z \mapsto z+\mu_1(P)-1$, we have
\begin{align*}
\sum_{\bsk \in P^\perp \setminus \{\bszero\}} b^{-\lambda \mu_1(\bsk)} & = \sum_{z=\mu_1(P^{\perp})}^{\infty}b^{-\lambda z}A_z \leq \sum_{z=\mu_1(P^{\perp})}^{\infty}b^{(1-\lambda) z-\mu_1(P^{\perp})+1}(z+1)^{s-1} \\
& = b^{-\lambda(\mu_1(P^{\perp})-1)}\sum_{z=1}^{\infty}b^{(1-\lambda) z}(z+\mu_1(P^{\perp}))^{s-1} \\
& \leq 2^{s-1}(\mu_1(P^{\perp}))^{s-1}b^{-\lambda(\mu_1(P^{\perp})-1)}\sum_{z=1}^{\infty}b^{(1-\lambda) z}z^{s-1},
\end{align*}
where the last sum over $z$ is finite since $\lambda>1$. Hence we complete the proof.
\end{proof}

\subsection{Explicit constructions}\label{subsec:standard_constr}
Theorem~\ref{thm:star-disc-bound} together with the Koksma-Hlawka inequality \eqref{eq:KH-inequ} gives a motivation to construct digital $(t,m,s)$-nets or digital $(t,s)$-sequences with small $t$-value. In fact, many explicit constructions of digital $(t,s)$-sequences with small $t$-value are already known. Examples are given by Sobol' \cite{Sob67}, Faure \cite{Fau82}, Niederreiter \cite{Nie88}, Tezuka \cite{Tez93}, Niederreiter and Xing \cite{NXbook} as well as many others. Here we give one example from \cite{Nie88,Tez93}. 

Let $p_1,p_2,\ldots\in \FF_b[x]$ be a sequence of distinct monic irreducible polynomials over $\FF_b$ with $\deg(p_1)\leq \deg(p_2)\leq \cdots$. For each $j\in \NN$, let $e_j=\deg(p_j)$ and consider the following Laurent series expansion
\begin{align}\label{eq:laurent}
 \frac{x^{e_j-z-1}}{(p_j(x))^i} = \sum_{l=1}^{\infty}\frac{a^{(j)}(i,z,l)}{x^l}\in \FF_b((x^{-1}))
\end{align}
for integers $i\geq 1$ and $0\leq z<e_j$. Define the matrix $C_j=(c_{k,l}^{(j)})_{k,l\in \NN}$ by
\[ c_{k,l}^{(j)} = a^{(j)}\left( \left\lfloor \frac{k-1}{e_j}\right\rfloor+1, (k-1)\bmod e_j, l\right).  \]
Here we see that the rows of $C_j$ (from upper to lower) correspond to the Laurent series expansions of
\[ \frac{x^{e_j-1}}{p_j(x)},\ldots,\frac{1}{p_j(x)},\frac{x^{e_j-1}}{(p_j(x))^2},\ldots,\frac{1}{(p_j(x))^2},\ldots . \]
Hence, we have $K_j(l)=l$ for any $j,l$ in the light of Remark~\ref{rem:precision}, that is, $c_{k,l}^{(j)}=0$ whenever $k>l$, meaning that $C_j$ is an upper triangular matrix. 

It is straightforward from the definition that this explicit construction of digital sequences is extensible in dimension. The first $s$ matrices $C_1,\ldots,C_s$ generate a digital $(t,s)$-sequence over $\FF_b$ with
\[ t \leq \sum_{j=1}^{s}(e_j-1). \]
We refer to \cite[Theorem~8.2]{DPbook} for the proof of this fact.

\begin{remark}
Some comments are in order:
\begin{enumerate}
\item Replacing the numerator $x^{e_j-z-1}$ of \eqref{eq:laurent} by $x^z$, this construction algorithm is the same as the one originally introduced in \cite{Nie88}, which is nowadays known as \emph{Niederreiter sequences}. For the original Niederreiter sequence, the $t$-value is strictly equal to $\sum_{j=1}^{s}(e_j-1)$ \cite{DN08}. 
\item A generalization of Niederreiter sequences by Tezuka \cite{Tez93} is to use the set of linearly independent polynomials $\{y_{j,i,z}(x)\mid 0\leq z<e_j\}$ over $\FF_b$ for the numerator of \eqref{eq:laurent} rather than the simplest set $\{x^z\mid 0\leq z<e_j\}$. 
\item The Sobol' sequences due to \cite{Sob67} are a subclass of the generalized Niederreiter sequences over $\FF_2$, where the primitive polynomials are used for $p_1,p_2,\ldots$. Recently, Faure and Lemieux \cite{FL16} gave some precise connections between the Sobol' sequences and the generalized Niederreiter sequences.
\end{enumerate}
\end{remark}

\subsection{Polynomial lattice point sets}
We end this section by introducing another important class of digital nets called \emph{polynomial lattice point sets} introduced by Niederreiter \cite{Nie92}. 

\begin{definition}
Let $m\in \NN$, and let $p\in \FF_b[x]$ and $\bsq=(q_1,\ldots,q_s)\in (\FF_b[x])^s$ such that $\deg(p)=m$ and $\deg(q_j)<m$. For $1\leq j\leq s$, consider the Laurent series expansion
\[ \frac{q_j(x)}{p(x)} = \sum_{l=1}^{\infty}\frac{a_l^{(j)}}{x^l}\in \FF_b((x^{-1})) \]
and define the Hankel matrix $C_j=(c_{k,l}^{(j)})_{1\leq k,l\leq m}\in \FF_b^{m\times m}$ by
\[ c_{k,l}^{(j)}= a_{k+l-1}^{(j)}. \]
Then a digital net over $\FF_b$ with these generating matrices $C_1,\ldots,C_s$ is called a \emph{polynomial lattice point set} (with modulus $p$ and generating vector $\bsq$).
\end{definition}
\noindent Indeed, polynomial lattice point sets can be constructed without using generating matrices explicitly. Define the map $v_m\colon \FF_b((x^{-1}))\to [0,1]$ by
\[ v_m\left( \sum_{i=w}^{\infty}a_i x^{-i}\right) := \sum_{i=\max\{1,w\}}^{m}a_ib^{-i}. \]
We identify $h\in \NN_0$, whose finite $b$-adic expansion is given by $h=\eta_0+\eta_1b+\cdots$, with the polynomial over $\FF_b$ given by $h(x)=\eta_0+\eta_1x+\cdots$. Put
\[ \bsx_h = \left( v_m\left(\frac{h(x)q_1(x)}{p(x)}\right),\ldots, v_m\left(\frac{h(x)q_s(x)}{p(x)}\right)\right)\in [0,1]^s. \]
Then a point set $\{\bsx_h\mid 0\leq h<b^m\}$ is nothing but the polynomial lattice point set as defined above.

The modulus $p$ is often chosen to be either the monomial $p(x)=x^m$ or irreducible. The difficulty is in how to choose the generating vector $\bsq$. In particular, for $s\geq 3$, no explicit way for this choice has been known yet. Currently one of the most standard approaches is to recursively choose one component $q_j$ from the set $\{q\in \FF_b[x]\mid \deg(q)<m\}$ which minimizes a considered criterion while the earlier ones $q_1,\ldots,q_{j-1}$ kept unchanged. This greedy algorithm is known as \emph{component-by-component construction} \cite{SR02}. Another well-known approach is to restrict ourselves to the form
\[ \bsq = (1,q,\ldots,q^{s-1})\in (\FF_b[x])^s \]
for $q\in \FF_b[x]$ with $\deg(q)<m$, and then to choose one optimal $q$ with respect to a considered criterion. This algorithm is known as \emph{Korobov construction}. Compared to Korobov construction, component-by-component construction has the advantages that it is extensible in dimension and that the fast algorithm using the fast Fourier transform has been well-established \cite{NC06}. However, neither of both is extensible in the number of points.

\section{Higher order quasi-Monte Carlo}\label{sec:HoQMC}

\subsection{Quality measure}\label{subsec:HO_def}
To introduce the definitions of higher order digital nets and sequences, we start with generalizing the NRT weight function.
\begin{definition}[Dick weight function]
Let $\alpha \in \NN$. For $k\in \NN$ we denote the $b$-adic expansion of $k$ by
\[ k=\kappa_1 b^{c_1-1}+\kappa_2 b^{c_2-1}+\cdots + \kappa_v b^{c_v-1} \]
with $\kappa_1,\ldots,\kappa_v\in \FF_b\setminus \{0\}$ and $c_1>\cdots >c_v>0$. Then the Dick weight function $\mu_\alpha\colon \NN_0\to \NN_0$ is defined by $\mu_\alpha(0)=0$ and 
\[ \mu_\alpha(k) = \sum_{i=1}^{\min(\alpha, v)}c_i. \] 
In case of vectors in $\NN_0^s$ we define
\[ \mu_\alpha(k_1,\ldots,k_s) = \sum_{j=1}^{s}\mu_\alpha(k_j). \]
\end{definition}
\noindent It is obvious that the Dick weight function coincides with the NRT weight function when $\alpha=1$. As a natural generalization of digital $(t,m,s)$-nets and $(t,s)$-sequences based on the NRT weight function, higher order digital nets and sequences due to Dick \cite{Dic07,Dic08} are defined by using the Dick weight function as follows:
\begin{definition}[Higher order digital nets]
Let $\alpha \in \NN$. Let $m,n\in \NN$ and let $P$ be a digital net over $\FF_b$ with generating matrices $C_1,\ldots,C_s\in \FF_b^{n\times m}$. We write
\[ \mu_\alpha(P^{\perp}) := \min_{\bsk\in P^{\perp}\setminus \{\bszero\}}\mu_\alpha(\bsk). \]
Then $P$ is called an \emph{order $\alpha$ digital $(t_{\alpha},m,s)$-net} over $\FF_b$ with 
\[ t_{\alpha}:=\alpha m - \mu_\alpha(P^{\perp}) +1. \]
\end{definition}

\begin{definition}[Higher order digital sequences]
Let $\Scal$ be a digital sequence over $\FF_b$. $\Scal$ is called an \emph{order $\alpha$ digital $(t_{\alpha},s)$-sequence} over $\FF_b$ if there exists $t_{\alpha}\in \NN_0$ such that the first $b^m$ points of $\Scal$ is an order $\alpha$ digital $(t_{\alpha},m,s)$-net for any $m\geq t_{\alpha}/\alpha$.
\end{definition}
\noindent
Obviously, for fixed $\alpha$, the $t_{\alpha}$-value works as a quality measure of order $\alpha$ digital nets and sequences.

Our definition of higher order digital nets is again based on the concept of duality theory of digital nets, and looks different from the original definition by Dick which has an additional parameter $\beta$. When $n\geq \alpha m$, however, by setting $\beta=1$, our definition becomes equivalent to his original definition based on the linear independence of row vectors of generating matrices described below: let $\rho$ be the largest integer such that, for any choice $1\leq d_{j,\nu_j}<\cdots < d_{j,1}\leq n$, where $0\leq \nu_j\leq m$ for all $1\leq j\leq s$ with 
\[ \sum_{j=1}^{s}\sum_{i=1}^{\min(\nu_j,\alpha)}d_{j,i} = \rho ,\]
\indent the $d_{1,\nu_1}, \ldots, d_{1,1}$-th row vectors of $C_1$\\
\indent the $d_{2,\nu_2}, \ldots, d_{2,1}$-th row vectors of $C_2$\\
\indent $\vdots$\\
\indent the $d_{s,\nu_s}, \ldots, d_{s,1}$-th row vectors of $C_s$\\[5pt]
are linearly independent over $\FF_b$. Then a digital net with generating matrices $C_1,\ldots,C_s$ is an order $\alpha$ digital $(t_{\alpha},m,s)$-net over $\FF_b$ with $t_{\alpha}=\alpha m-\rho$. This linear independence of the row vectors of generating matrices ensures that a higher order digital net has a similar geometric equi-distribution property to what is described in Subsection~\ref{subsec:standard_def}, see \cite[Chapter~15.3]{DPbook} for more details.

Here we provide one useful property called \emph{propagation rule} of higher order digital nets and sequences shown in \cite[Theorem~3.3]{Dic07}. We give a different proof which does not rely on the linear independence of generating matrices. 
\begin{lemma}\label{lem:propagation}
For $\beta\in \NN$, $\beta\geq 2$, let $P$ and $\Scal$ be an order $\beta$ digital $(t_{\beta},m,s)$-net and an order $\beta$ digital $(t_{\beta},s)$-sequence over $\FF_b$, respectively. Then, for any $1\leq \alpha< \beta$, $P$ and $\Scal$ are also an order $\alpha$ digital $(t_{\alpha},m,s)$-net over $\FF_b$ and an order $\alpha$ digital $(t_{\alpha},s)$-sequence over $\FF_b$, respectively, both with $t_{\alpha} \leq \lceil t_{\beta} \alpha/\beta \rceil$.
\end{lemma}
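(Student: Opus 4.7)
The plan is to reduce everything to a pointwise comparison between the Dick weight functions $\mu_\alpha$ and $\mu_\beta$ and then read off the $t$-values via the duality-based definitions $t_\alpha = \alpha m - \mu_\alpha(P^{\perp}) + 1$ and $t_\beta = \beta m - \mu_\beta(P^{\perp}) + 1$. The key observation is that the dual net $P^{\perp}$ depends only on the generating matrices of $P$, not on the chosen order, so the only thing that varies with the order is how each nonzero $\bsk \in P^{\perp}$ is weighted. Thus any uniform pointwise inequality between $\mu_\alpha$ and $\mu_\beta$ transfers immediately to $\mu_\alpha(P^{\perp})$ and $\mu_\beta(P^{\perp})$ by minimizing over $\bsk$.

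The central claim I need is the pointwise bound $\mu_\alpha(k) \geq (\alpha/\beta)\mu_\beta(k)$ for every $k \in \NN_0$. Writing $k$ with its nonzero digit positions $c_1 > c_2 > \cdots > c_v > 0$, I split into the cases $v \leq \alpha$, $\alpha < v \leq \beta$, and $v > \beta$. The first case is trivial because both weights coincide with $c_1 + \cdots + c_v$. In the remaining two cases the inequality rearranges to $(\beta - \alpha)(c_1 + \cdots + c_\alpha) \geq \alpha(c_{\alpha+1} + \cdots + c_{\min(v,\beta)})$, which follows from the strict monotonicity $c_1 > \cdots > c_v$: every summand on the left is at least $c_\alpha$, every summand on the right is strictly less than $c_\alpha$, and the number of summands on the right is at most $\beta - \alpha$. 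The extension to vectors $\bsk \in \NN_0^s$ is by summing over coordinates.

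Minimizing then gives $\mu_\alpha(P^{\perp}) \geq (\alpha/\beta)\mu_\beta(P^{\perp}) = (\alpha/\beta)(\beta m - t_\beta + 1)$, and since $\mu_\alpha(P^{\perp})$ is a nonnegative integer this sharpens to $\mu_\alpha(P^{\perp}) \geq \lceil (\alpha/\beta)(\beta m - t_\beta + 1)\rceil$. Substituting into $t_\alpha = \alpha m - \mu_\alpha(P^{\perp}) + 1$ and pulling out the integer $\alpha m$ yields $t_\alpha \leq 1 + \lfloor \alpha(t_\beta - 1)/\beta\rfloor$. A short case analysis then confirms the identity $1 + \lfloor x - r\rfloor \leq \lceil x \rceil$ with $x = \alpha t_\beta/\beta$ and $r = \alpha/\beta \in (0,1)$, giving the claimed $t_\alpha \leq \lceil \alpha t_\beta/\beta\rceil$. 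This ceiling/floor bookkeeping is the only mildly delicate step and is the most likely source of an off-by-one slip; everything else is structural.

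For the sequence statement, set $t_\alpha = \lceil \alpha t_\beta/\beta\rceil$. Since $\lceil \alpha t_\beta/\beta\rceil \geq \alpha t_\beta/\beta$, we have $t_\alpha/\alpha \geq t_\beta/\beta$, so any $m \geq t_\alpha/\alpha$ automatically satisfies $m \geq t_\beta/\beta$. The first $b^m$ points of $\Scal$ then form an order $\beta$ digital $(t_\beta,m,s)$-net by hypothesis, and the finite case just proved upgrades them to an order $\alpha$ digital $(t_\alpha,m,s)$-net, uniformly in $m$. This is exactly the defining property of an order $\alpha$ digital $(t_\alpha,s)$-sequence.
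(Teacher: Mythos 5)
Your proposal is correct and follows essentially the same route as the paper: both rest on the pointwise inequality $\mu_{\alpha}(\bsk)/\alpha \geq \mu_{\beta}(\bsk)/\beta$ (the paper derives it as an average-of-a-nonincreasing-sequence statement, you by the equivalent rearrangement $(\beta-\alpha)(c_1+\cdots+c_\alpha)\geq \alpha(c_{\alpha+1}+\cdots+c_{\min(v,\beta)})$), followed by minimizing over the dual net and integer bookkeeping. The only difference is where the rounding happens — you take the ceiling of $\mu_{\alpha}(P^{\perp})$ first, which yields the intermediate bound $t_{\alpha}\leq 1+\lfloor \alpha(t_{\beta}-1)/\beta\rfloor$ (occasionally sharper), while the paper rounds $t_{\alpha}$ at the end; both correctly give $t_{\alpha}\leq \lceil t_{\beta}\alpha/\beta\rceil$.
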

\begin{proof}
First we prove that the inequality
\[ \frac{\mu_{\alpha}(\bsk)}{\alpha} \geq \frac{\mu_{\beta}(\bsk)}{\beta} \]
holds for any $\bsk\in \NN_0^s$ and $1\leq \alpha\leq \beta$. Since the weight function for vector $\bsk$ is defined as the sum of the weight function for each coordinate, it suffices to prove the one-dimensional case. Since the result for $k=0$ follows trivially, let us consider $k>0$. Denote the $b$-adic expansion of $k$ by
\[ k=\kappa_1 b^{c_1-1}+\kappa_2 b^{c_2-1}+\cdots + \kappa_v b^{c_v-1} \]
with $\kappa_1,\ldots,\kappa_v\in \FF_b\setminus \{0\}$ and $c_1>\cdots >c_v>0$, and write $c_{v+1}=c_{v+2}=\cdots =0$. Then we have
\[ \frac{\mu_{\alpha}(k)}{\alpha} =\frac{1}{\alpha}\sum_{i=1}^{\alpha}c_i\geq \frac{1}{\beta}\sum_{i=1}^{\beta}c_i=\frac{\mu_{\beta}(k)}{\beta}, \]
which proves the assertion.

Now let us consider an order $\beta$ digital $(t_{\beta},m,s)$-net over $\FF_b$. Using the above inequality we obtain
\[ \mu_\alpha(P^{\perp}) = \min_{\bsk\in P^{\perp}\setminus \{\bszero\}}\mu_\alpha(\bsk) \geq \min_{\bsk\in P^{\perp}\setminus \{\bszero\}}\frac{\alpha}{\beta}\mu_\beta(\bsk) = \frac{\alpha}{\beta}\mu_\beta(P^{\perp}). \]
Thus $P$ is an order $\alpha$ digital $(t_{\alpha},m,s)$-net over $\FF_b$ with
\begin{align*}
 t_{\alpha} & =\alpha m - \mu_\alpha(P^{\perp}) +1 \leq \alpha m - \frac{\alpha}{\beta}\mu_\beta(P^{\perp}) +1\\
 & = \frac{\alpha}{\beta}\left( \beta m- \mu_\beta(P^{\perp}) +1\right) + \frac{\beta-\alpha}{\beta} = \frac{\alpha}{\beta}t_{\beta}+ \frac{\beta-\alpha}{\beta}.
\end{align*}
Given that $t_{\alpha}$ is a non-negative integer and that the fraction $(\beta-\alpha)/\beta$ is less than 1, the $t_{\alpha}$-value can be bounded above by $\lceil t_{\beta} \alpha/\beta \rceil$. The result for an order $\beta$ digital $(t_{\beta},s)$-sequence follows immediately.
\end{proof}
\noindent 
Importantly this result implies that any higher order digital nets and sequences are also digital $(t,m,s)$-nets and digital $(t,s)$-sequences, respectively.

We end this subsection by providing two useful results in analyzing the integration error of QMC rules using higher order digital nets. The first lemma is a higher order version of Lemma~\ref{lem:digital-net-card}.
\begin{lemma}\label{lem:HO_digital-net-card} For $\alpha\geq 2$, let $P$ be an order $\alpha$ digital $(t_{\alpha},m,s)$-net over $\FF_b$. The following holds true:
\begin{enumerate}
\item For $z\in \NN_0$,
\begin{align*}
 \left| \left\{ \bsk\in P^{\perp}\setminus \{\bszero\}\mid \mu_\alpha(\bsk)=z \right\} \right| \leq \begin{cases} 0 & \text{if $z<\mu_\alpha(P^{\perp})$,} \\ (b-1)^{s\alpha }b^{(z-\mu_\alpha(P^{\perp}))/\alpha}(z+2)^{s\alpha -1} & \text{otherwise.} \end{cases} 
\end{align*}
\item For any real $\lambda>1/\alpha$,
\[ \sum_{\bsk\in P^{\perp}\setminus \{\bszero\}} b^{-\lambda \mu_{\alpha}(\bsk)} \leq 2^{s\alpha-1}(b-1)^{s\alpha }\frac{(\mu_\alpha(P^{\perp}))^{s\alpha -1}}{b^{\lambda \mu_\alpha(P^{\perp})}}\sum_{z=0}^{\infty}b^{(1/\alpha-\lambda) z}(z+2)^{s\alpha -1} .\]
\end{enumerate}
\end{lemma}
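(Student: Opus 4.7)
The plan is to mirror the proof of Lemma~\ref{lem:digital-net-card}, with the NRT weight replaced by the Dick weight $\mu_\alpha$ and an appropriate higher-order counting bound in place of the Skriganov-type estimate \cite[Lemma~2.2]{Skr06}. Abbreviate $\tau = \mu_\alpha(P^{\perp})$ and $A_z = |\{\bsk \in P^{\perp} \setminus \{\bszero\} : \mu_\alpha(\bsk) = z\}|$. The first assertion for $z < \tau$ is immediate from the definition of $\tau$, so the core task in part~(1) is to establish $A_z \leq (b-1)^{s\alpha} b^{(z-\tau)/\alpha}(z+2)^{s\alpha-1}$ for $z \geq \tau$.

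First I would decompose $\bsk \in P^\perp\setminus\{\bszero\}$ with $\mu_\alpha(\bsk) = z$ according to its \emph{coordinate profile}: for each $j$, the number $\nu_j = \min(\alpha, v_j) \in \{0, \ldots, \alpha\}$ of counted digits, the positions $c_{j,1} > \cdots > c_{j,\nu_j} \geq 1$ (summing to $z_j := \mu_\alpha(k_j)$), the nonzero digit values at those positions, and, when $\nu_j = \alpha$, the free digits at the lower positions $1, \ldots, c_{j,\alpha}-1$. The number of position configurations with $\sum_{j,i} c_{j,i} = z$ is bounded by a stars-and-bars argument (treating the $s\alpha$ position slots uniformly) as $\binom{z+s\alpha-1}{s\alpha-1} \leq (z+1)^{s\alpha-1} \leq (z+2)^{s\alpha-1}$, in direct analogy with the $(z+1)^{s-1}$ factor in Lemma~\ref{lem:digital-net-card}.

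The crux is a higher-order analogue of Skriganov's counting lemma, stating that for each fixed position configuration the number of digit choices producing a vector in $P^\perp \setminus \{\bszero\}$ is at most $(b-1)^{s\alpha} b^{(z-\tau)/\alpha}$. The $(b-1)^{s\alpha}$ factor is the crude upper bound on the nonzero top-digit choices $(b-1)^{\sum_j \nu_j}$. The sharper factor $b^{(z-\tau)/\alpha}$ (compare with $b^{z-\tau+1}$ in the order-$1$ case) is the main obstacle: it requires carefully tracking the interplay between the top-position data and the rank of the linear system cutting out $P^{\perp}$, exploiting that each coordinate contributes up to $\alpha$ units to $\mu_\alpha$, so the $P^{\perp}$ constraint absorbs $\tau/\alpha$ effective units of digital freedom rather than $\tau$ as in the order-$1$ case. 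Multiplying the position-count bound by this digit-count bound yields part~(1).

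For part~(2), I would substitute part~(1) into the series and then shift the summation index by $\tau$:
\[
\sum_{\bsk\in P^\perp\setminus\{\bszero\}} b^{-\lambda \mu_\alpha(\bsk)} = \sum_{z=\tau}^\infty b^{-\lambda z}A_z \leq (b-1)^{s\alpha}\, b^{-\lambda \tau}\sum_{z=0}^\infty b^{(1/\alpha-\lambda)z} (z+\tau+2)^{s\alpha-1}.
\]
Finally I would apply the elementary bound $(z+\tau+2)^{s\alpha-1} \leq 2^{s\alpha-1}\tau^{s\alpha-1}(z+2)^{s\alpha-1}$, valid since $\tau \geq 1$ and $z+2 \geq 2$ give $z+\tau+2 \leq 2\max(\tau, z+2) \leq 2\tau(z+2)$, to match the claimed bound; convergence of $\sum_z b^{(1/\alpha-\lambda)z}(z+2)^{s\alpha-1}$ is automatic from the assumption $\lambda > 1/\alpha$.
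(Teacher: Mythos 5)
Your overall strategy is the same as the paper's: decompose $\bsk$ by the profile of its top $\alpha$ digit positions in each coordinate, bound the number of profiles by stars and bars as $\binom{z+s\alpha-1}{s\alpha-1}\leq(z+2)^{s\alpha-1}$, multiply by a Skriganov-type bound on the digit choices per profile, and then for part~(2) shift the index and use $(z+\tau+2)^{s\alpha-1}\leq 2^{s\alpha-1}\tau^{s\alpha-1}(z+2)^{s\alpha-1}$. Part~(2) of your argument is complete and matches the paper's computation essentially line for line.

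The one substantive issue is in part~(1): the higher-order counting bound that you correctly single out as ``the crux'' is asserted with only a heuristic (``the $P^{\perp}$ constraint absorbs $\tau/\alpha$ effective units of digital freedom'') and never established. The paper does not prove it from scratch either, but it invokes a precise, citable form from the proof of Lemma~15.20 in the Dick--Pillichshammer book: for a fixed profile the count is at most $(b-1)^{s\alpha}\max\bigl(1,\,b^{\,i-\mu_\alpha(P^{\perp})/\alpha}\bigr)$, where $i=z_{\alpha,1}+\cdots+z_{\alpha,s}$ is the sum of the $\alpha$-th largest digit positions, \emph{not} a bound uniform in the profile. Your uniform bound $(b-1)^{s\alpha}b^{(z-\tau)/\alpha}$ does follow from this, since $z_{\alpha,j}$ is the smallest of the counted positions in coordinate $j$ and hence $i\leq z/\alpha$; the paper instead keeps the dependence on $i$, splits the profile count as $\sum_{i=0}^{\lfloor z/\alpha\rfloor}\binom{i+s-1}{s-1}\binom{z-i+s(\alpha-1)-1}{s(\alpha-1)-1}$, and sums over $i$, arriving at the same final exponent $s\alpha-1$. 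So your route is sound and slightly cruder, but as written the central lemma is a placeholder: to make the proof complete you would need either to prove the configuration-dependent bound (tracking how the linear-independence property defining an order-$\alpha$ net restricts the free digits below position $c_{\alpha,j}$ in each coordinate) or to cite it, as the paper does.
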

\begin{proof}
Similarly to the proof of Lemma~\ref{lem:digital-net-card}, we put $A_{\alpha,z}=\left| \left\{ \bsk\in P^{\perp}\setminus \{\bszero\}\mid \mu_\alpha(\bsk)=z \right\} \right|$. For $1\leq j\leq s$, denote the $b$-adic expansion of $k_j\in \NN_0$ by
\[ k_j=\kappa_{1,j} b^{c_{1,j}-1}+\kappa_{2,j} b^{c_{2,j}-1}+\cdots + \kappa_{v_j,j} b^{c_{v_j,j}-1} \]
with $\kappa_{1,j},\ldots,\kappa_{v_j,j}\in \FF_b\setminus \{0\}$ and $c_{1,j}>\cdots >c_{v_j,j}>0$, and write $c_{v_j+1,j}=c_{v_j+2,j}=\cdots =0$. If $k_j=0$, let $c_{1,j}=c_{2,j}=\cdots=0$. Then we have
\begin{align*}
A_{\alpha,z} = \sum_{\substack{z_{1,j}\geq \cdots \geq z_{\alpha,j}\in \NN_0, \forall j=1,\ldots, s\\ z_{1,1}+\cdots +z_{\alpha,1}+\cdots+z_{1,s}+\cdots+z_{\alpha,s}=z}}\left| \left\{ \bsk\in P^{\perp}\setminus \{\bszero\}\mid c_{i,j}=z_{i,j}, 1\leq i\leq \alpha, 1\leq j\leq s \right\} \right| .
\end{align*}
It can be inferred from \cite[Proof of Lemma~15.20]{DPbook} that the summand is bounded above by
\begin{align*}
& \left| \left\{ \bsk\in P^{\perp}\setminus \{\bszero\}\mid c_{i,j}=z_{i,j}, 1\leq i\leq \alpha, 1\leq j\leq s \right\} \right| \\
& \leq \begin{cases} 0 & \text{if $z<\mu_\alpha(P^{\perp})$,} \\ (b-1)^{s\alpha} & \text{if $z\geq \mu_\alpha(P^{\perp})$ and $z_{\alpha,1}+\cdots+z_{\alpha,s}<\mu_\alpha(P^{\perp})/\alpha$,}  \\(b-1)^{s\alpha}b^{z_{\alpha,1}+\cdots+z_{\alpha,s}-\mu_\alpha(P^{\perp})/\alpha} & \text{if $z\geq \mu_\alpha(P^{\perp})$ and $z_{\alpha,1}+\cdots+z_{\alpha,s}\geq \mu_\alpha(P^{\perp})/\alpha$.} \end{cases}
\end{align*}
Thus, we have $A_{\alpha,z}=0$ if $z<\mu_\alpha(P^{\perp})$, since each summand is 0. For $z\geq \mu_\alpha(P^{\perp})$, this bound gives
\begin{align*}
A_{\alpha,z} & \leq  (b-1)^{s\alpha} \sum_{\substack{z_{1,j}\geq \cdots \geq z_{\alpha,j}\in \NN_0, \forall j=1,\ldots, s\\ z_{1,1}+\cdots +z_{\alpha,1}+\cdots+z_{1,s}+\cdots+z_{\alpha,s}=z}} \max(1,b^{z_{\alpha,1}+\cdots+z_{\alpha,s}-\mu_\alpha(P^{\perp})/\alpha}) \\
& \leq (b-1)^{s\alpha} \sum_{i=0}^{\lfloor z/\alpha \rfloor}\binom{i+s-1}{s-1}\binom{z-i+s(\alpha-1)-1}{s(\alpha-1)-1} \max(1, b^{i-\mu_\alpha(P^{\perp})/\alpha})\\
& \leq (b-1)^{s\alpha} \sum_{i=0}^{\lfloor z/\alpha\rfloor}(i+1)^{s-1}(z-i+1)^{s(\alpha-1)-1}\max(1, b^{i-\mu_\alpha(P^{\perp})/\alpha}) \\
& \leq (b-1)^{s\alpha} (z+2)^{s\alpha-2} \sum_{i=0}^{\lfloor z/\alpha \rfloor} \max(1, b^{i-\mu_\alpha(P^{\perp})/\alpha}) \\
& \leq (b-1)^{s\alpha} (z+2)^{s\alpha-2}(z/\alpha+1)b^{z/\alpha-\mu_\alpha(P^{\perp})/\alpha},
\end{align*}
which proves the first assertion of the lemma.

Using the result of the first assertion and then applying the change of variables $z \mapsto z+\mu_{\alpha}(P)$, we have
\begin{align*}
\sum_{\bsk \in P^\perp \setminus \{\bszero\}} b^{-\lambda \mu_\alpha(\bsk)} & = \sum_{z=\mu_\alpha(P^{\perp})}^{\infty}b^{-\lambda z}A_{\alpha,z} \\
& \leq (b-1)^{s\alpha }\sum_{z=\mu_\alpha(P^{\perp})}^{\infty}b^{-\lambda z+(z-\mu_\alpha(P^{\perp}))/\alpha}(z+2)^{s\alpha -1} \\
& \leq (b-1)^{s\alpha }b^{-\lambda \mu_\alpha(P^{\perp})}\sum_{z=0}^{\infty}b^{(1/\alpha-\lambda) z}(z+\mu_\alpha(P^{\perp})+2)^{s\alpha -1} \\
& \leq 2^{s\alpha-1}(b-1)^{s\alpha }(\mu_\alpha(P^{\perp}))^{s\alpha -1}b^{-\lambda \mu_\alpha(P^{\perp})}\sum_{z=0}^{\infty}b^{(1/\alpha-\lambda) z}(z+2)^{s\alpha -1}, 
\end{align*}
where the last sum over $z$ is finite since $\lambda>1/\alpha$. Hence we complete the proof.
\end{proof}

Before stating the second useful lemma, we need to recall the notion of ``type $(p,q)$'' introduced in \cite{GSY3}.
\begin{definition}\label{def:type}
For $k,l\in \NN_0$, we denote the $b$-adic expansions of $k$ and $l$ by
$$k=\sum_{i=1}^{v}\kappa_ib^{c_i-1}\quad \text{and}\quad l=\sum_{i=1}^{w}\lambda_ib^{d_i-1},$$
respectively, where $\kappa_1,\ldots,\kappa_v,\lambda_1,\ldots,\lambda_w\in \{1,\ldots,b-1\}$, $c_1>c_2>\cdots >c_v>0$ and $d_1>d_2>\cdots >d_w>0$.
For $k=0$ ($l=0$, resp.), we assume that $v=0$ and $\kappa_0b^{c_0-1}=0$ ($w=0$ and $\lambda_0b^{d_0-1}=0$, resp.).
For $p,q\in \NN_0$, we write
$$k^{(p)}=\sum_{i=p+1}^{v}\kappa_ib^{c_i-1}\quad \text{and}\quad l^{(q)}=\sum_{i=q+1}^{w}\lambda_ib^{d_i-1},$$
where the empty sum equals 0.
Then we say that $(k,l)$ is of type $(p,q)$ if $k^{(p)}=l^{(q)}$ and $\kappa_{p}b^{c_p-1}\neq \lambda_{q}b^{d_q-1}$, where we set $\kappa_{0}b^{c_0-1}=\lambda_{0}b^{d_0-1}=0$, except the case $k=l$ where we say that $(k,l)$ is of type $(0,0)$.
\end{definition}
\noindent In what follows, we write $(k,l)\in T_{\geq \alpha}$ if $(k,l)$ is of type $(p,q)$ with $p+q\geq \alpha$. In case of vectors in $\NN_0^s$ we write $(\bsk,\bsl)\in T_{\geq \alpha}$ if there exists at least one index $1\leq j\leq s$ such that $(k_j,l_j)\in T_{\geq \alpha}$.

Now the following result, which can be regarded as a generalization of the result shown in \cite[Lemma~3.7]{DP14}, is proven in \cite[Lemma~8]{GSY3}. Here we state the result in a slightly more general form.
\begin{lemma}\label{lem:HO_digital-net-card2} For $\alpha\in \NN$, let $P$ be an order $\alpha$ digital $(t_{\alpha},m,s)$-net over $\FF_b$. For $z\in \NN_0$, there exists a constant $B_{\alpha,b,s,t_{\alpha}}>0$ such that the following holds:
\begin{align*}
& \left| \left\{ (\bsk,\bsl)\in (P^{\perp}\setminus \{\bszero\})^2 \,|\, \mu_1(\bsk)+\mu_1(\bsl)=z, (\bsk,\bsl)\notin T_{\geq \alpha} \right\} \right| \\
& \leq \begin{cases} 0 & \text{if $z<2\mu_1(P^{\perp})$,} \\ B_{\alpha,b,s,t_{\alpha}}(z-2\mu_1(P^{\perp}))^{s(\alpha-1)+1}z^{s-1}b^{(z-2\mu_1(P^{\perp}))/2} & \text{otherwise.} \end{cases} 
\end{align*}
\end{lemma}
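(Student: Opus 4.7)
The plan is to mirror the argument of Lemma~8 in \cite{GSY3} (which generalizes Lemma~3.7 in \cite{DP14}), exploiting the $\FF_b$-linearity of $P^{\perp}$ together with the type constraint to extract the crucial square-root improvement in the base-$b$ exponent. I would split the count into the diagonal case $\bsk=\bsl$ and the off-diagonal case $\bsk\neq\bsl$.

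In the diagonal case the pair is automatically of type $(0,0)$ in every coordinate, so $(\bsk,\bsl)\notin T_{\geq \alpha}$ is trivially met, and the count reduces to the number of $\bsk\in P^{\perp}\setminus\{\bszero\}$ with $\mu_1(\bsk)=z/2$ (nonzero only when $z$ is even). The first assertion of Lemma~\ref{lem:digital-net-card} bounds this by $b^{z/2-\mu_1(P^{\perp})+1}(z/2+1)^{s-1}$, which is comfortably of the claimed form.

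In the off-diagonal case I exploit that $\bsk\oplus\bsl\in P^{\perp}\setminus\{\bszero\}$ and unpack the type condition coordinate-wise: if $(k_j,l_j)$ is of type $(p_j,q_j)$ with $p_j+q_j<\alpha$, then after removing the top $p_j$ nonzero $b$-adic digits of $k_j$ and the top $q_j$ nonzero $b$-adic digits of $l_j$ the resulting tails coincide. Consequently $k_j\oplus l_j$ has at most $p_j+q_j\leq \alpha-1$ nonzero digits, all located at top positions of $k_j$ or $l_j$, and $(k_j,l_j)$ is determined by its shared tail together with these differing top digits. I would then enumerate the pairs by (i) decompositions $z=\sum_{j=1}^{s}(u_j+v_j)$ with $u_j=\mu_1(k_j)$ and $v_j=\mu_1(l_j)$, (ii) admissible type vectors $((p_j,q_j))_{j=1}^{s}$ with $p_j+q_j<\alpha$, and (iii) the top-digit positions and values together with the shared tail. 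For fixed data the tail lies in a coset of $P^{\perp}$, and by a translation argument the number of tails of $\mu_1$-weight $w$ is bounded, up to a multiplicative constant depending on $\alpha,b,s,t_{\alpha}$, by $b^{w-\mu_1(P^{\perp})}$ via the first assertion of Lemma~\ref{lem:digital-net-card}.

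The main obstacle, and the source of the factor $b^{(z-2\mu_1(P^{\perp}))/2}$, is that the tail is \emph{shared} between $\bsk$ and $\bsl$: its $\mu_1$-weight $w$ contributes only once to the cardinality of pairs but is counted twice inside $z=\mu_1(\bsk)+\mu_1(\bsl)$, forcing $w\leq z/2$ once the top-digit contributions are accounted for. Hence the tail count is at most $b^{z/2-\mu_1(P^{\perp})}=b^{(z-2\mu_1(P^{\perp}))/2}$. Summing over admissible decompositions, type vectors, and top-digit configurations then produces the polynomial prefactor $(z-2\mu_1(P^{\perp}))^{s(\alpha-1)+1}z^{s-1}$: the factor $z^{s-1}$ records the splittings of $\mu_1(\bsk)$ among $s$ coordinates, while $(z-2\mu_1(P^{\perp}))^{s(\alpha-1)+1}$ collects the polynomial count of top-digit position vectors of total length $\sum_{j}(p_j+q_j)\leq s(\alpha-1)$ together with the digit-value choices.
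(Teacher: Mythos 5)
The paper itself does not prove this lemma: it only restates \cite[Lemma~8]{GSY3} (which generalizes \cite[Lemma~3.7]{DP14}), so there is no in-paper proof to compare against. Your skeleton does match the strategy of the cited proof: split off the diagonal, decompose each off-diagonal pair coordinatewise into a shared tail plus at most $p_j+q_j\le\alpha-1$ differing top digits, observe that the tail's $\mu_1$-weight is counted twice inside $z$ and hence is at most $z/2$, and count the admissible tails by a Skriganov-type bound (Lemma~\ref{lem:digital-net-card}) applied to cosets of $P^{\perp}$. This correctly produces the factor $b^{(z-2\mu_1(P^{\perp}))/2}$ and the vanishing for $z<2\mu_1(P^{\perp})$.

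The genuine gap is that you never invoke the hypothesis that $P$ is an \emph{order $\alpha$} net, and the lemma cannot be proved without it. It enters through $\bsr=\bsk\ominus\bsl$: for $\bsk\neq\bsl$ this is a nonzero element of $P^{\perp}$ which, precisely because of the type restriction, has at most $\alpha-1$ nonzero digits in each coordinate; hence $\mu_{\alpha}(\bsr)$ equals the sum of \emph{all} its digit positions and must be at least $\alpha m-t_{\alpha}+1$. This is what pins the top-digit positions to a range of total length $O(z-2\mu_1(P^{\perp}))$ near the top of $\bsk$ and $\bsl$ (and rules out small shared tails unless $z$ is correspondingly large). Without this input, your enumeration of top-digit position vectors only yields a factor of order $z^{s(\alpha-1)}$ rather than $(z-2\mu_1(P^{\perp}))^{s(\alpha-1)+1}$, and the distinction is not cosmetic: in the proofs of Theorems~\ref{thm:L_2-disc} and \ref{thm:optimal_error_gsy3} the substitution $z\mapsto z+2\mu_1(P^{\perp})$ turns every power of $z-2\mu_1(P^{\perp})$ into a bounded quantity but every power of $z$ into a power of $m$, so the weaker factor would inflate the final bound from $m^{s-1}$ to roughly $m^{s\alpha}$ and destroy optimality. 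Two smaller points: the coordinatewise splittings of the weights must be charged only once (the weights of $\bsk$ and $\bsl$ are determined jointly by the tail profile and the top positions), otherwise you get $z^{2(s-1)}$ instead of $z^{s-1}$; and your diagonal bound does not literally sit below the stated right-hand side at $z=2\mu_1(P^{\perp})$, where that side vanishes --- an off-by-one in the statement as reproduced here rather than a defect of your argument.
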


\subsection{Digit interlacing construction}\label{subsec:digit-interlacing}
Here we give an explicit construction of higher order digital nets and sequences based on the \emph{digit interlacing function} due to Dick \cite{Dic07,Dic08}:
\begin{definition}\label{def:interlacing}
Let $\alpha\in \NN$ and let $\bsx=(x_1,\ldots,x_{\alpha})\in [0,1]^{\alpha}$.
For $1\leq j\leq \alpha$, we denote the $b$-adic expansion of $x_j$ by $x_j=\sum_{i=1}^{\infty}\xi_{i,j}b^{-i}$. The digit interlacing function (of factor $\alpha)$ $\Dcal_{\alpha}: [0,1]^{\alpha}\to [0,1]$ is defined by
\[  \Dcal_{\alpha}(x_1,\ldots,x_{\alpha}) := \sum_{i=1}^{\infty}\sum_{j=1}^{\alpha}\frac{\xi_{i,j}}{b^{\alpha(i-1) +j}} .\]
In case of vectors in $[0,1]^{\alpha s}$, we apply $\Dcal_{\alpha}$ to every non-overlapping consecutive $\alpha$ components, i.e.,
\[  \Dcal_{\alpha}(x_1,\ldots,x_{\alpha s}) := (\Dcal_{\alpha}(x_1,\ldots,x_{\alpha}),\ldots,\Dcal_{\alpha}(x_{\alpha(s-1)+1},\ldots,x_{\alpha s}))\in [0,1]^s .\]
\end{definition}
\begin{lemma}\label{lem:ho_construction} The following holds true:
\begin{enumerate}
\item Let $P$ be a digital $(t,m,\alpha s)$-net over $\FF_b$. The set $\Dcal_{\alpha}(P) = \{\Dcal_{\alpha}(\bsx)\mid \bsx\in P\}$ is an order $\alpha$ digital $(t_{\alpha},m,s)$-net over $\FF_b$ with
\[ t_{\alpha} \leq \alpha \min\left( m,t+\left\lfloor \frac{s(\alpha-1)}{2}\right\rfloor\right). \]
\item Let $\Scal$ be a digital $(t,\alpha s)$-sequence over $\FF_b$. The sequence $\Dcal_{\alpha}(\Scal) = \{\Dcal_{\alpha}(\bsx)\mid \bsx\in \Scal\}$ is an order $\alpha$ digital $(t_{\alpha},s)$-sequence over $\FF_b$ with
\[ t_{\alpha} \leq \alpha t+ \frac{s\alpha (\alpha-1)}{2}. \]
\end{enumerate}
\end{lemma}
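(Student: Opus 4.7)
My plan is to work directly with the linear-independence characterization of the $t_\alpha$-value described in Section~\ref{subsec:HO_def}. First, I unpack how the generating matrices of $\Dcal_\alpha(P)$ relate to those of $P$: writing $C_1,\ldots,C_{\alpha s}$ for the generating matrices of $P$, a direct check against Definition~\ref{def:interlacing} shows that the generating matrix $C'_j$ of $\Dcal_\alpha(P)$ is obtained by interlacing rows so that its $(\alpha(i-1)+r)$-th row equals the $i$-th row of $C_{\alpha(j-1)+r}$ for $r\in\{1,\ldots,\alpha\}$. Consequently, any selection of rows in an interlaced matrix $C'_j$ corresponds bijectively, under the map $d\mapsto(\lceil d/\alpha\rceil,\,\alpha(j-1)+((d-1)\bmod\alpha)+1)$, to a set of distinct rows of the original matrices $C_{\alpha(j-1)+1},\ldots,C_{\alpha j}$.

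Next, I fix indices $d_{j,\nu_j}<\cdots<d_{j,1}$ in $C'_j$ for $j=1,\ldots,s$, and write $d_{j,i}=\alpha(e_{j,i}-1)+r_{j,i}$ with $r_{j,i}\in\{1,\ldots,\alpha\}$. For each $J=\alpha(j-1)+r$, set $D_J:=\max\{e_{j,i}\mid r_{j,i}=r\}$, with $D_J:=0$ when that set is empty. The chosen rows of $C'_j$ then sit among the first $D_J$ rows of the original $C_J$, so the $(t,m,\alpha s)$-net property of $P$ guarantees that the selected rows of $C'_1,\ldots,C'_s$ are linearly independent over $\FF_b$ as soon as $\sum_{J=1}^{\alpha s}D_J\leq m-t$.

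The heart of the argument is now a combinatorial inequality bounding $\sum_J D_J$. For each fixed $j$, using that $\alpha D_{\alpha(j-1)+r}=d_r^{*}+(\alpha-r)$ whenever the defining set is non-empty (with $d_r^{*}$ its largest element) and equals $0$ otherwise, one obtains
\[
\alpha\sum_{r=1}^{\alpha} D_{\alpha(j-1)+r}\;\leq\;\sum_{i=1}^{\min(\nu_j,\alpha)}d_{j,i}\;+\;\frac{\alpha(\alpha-1)}{2},
\]
because the nonzero $d_r^{*}$'s form a distinct sub-collection of $\{d_{j,i}\}$ of cardinality at most $\min(\nu_j,\alpha)$, while the residual $\sum_{r\in R_j}(\alpha-r)$ is maximized over admissible $R_j\subseteq\{1,\ldots,\alpha\}$ at $\alpha(\alpha-1)/2$. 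Summing over $j$ and substituting into the sufficient linear-independence condition produces $t_\alpha\leq\alpha t+s\alpha(\alpha-1)/2$. Combined with the trivial bound $t_\alpha\leq\alpha m$ (from $\mu_\alpha(\bsk)\geq 1$ for any nonzero $\bsk$), and a small refinement that exploits integrality of $\sum_J D_J$ to replace $s\alpha(\alpha-1)/2$ by $\alpha\lfloor s(\alpha-1)/2\rfloor$, this gives the stated bound $t_\alpha\leq\alpha\min\bigl(m,\,t+\lfloor s(\alpha-1)/2\rfloor\bigr)$.

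For the sequence statement, I apply the net result to each prefix: for every $m\geq t$, the first $b^m$ elements of $\Scal$ form a digital $(t,m,\alpha s)$-net, so their image under $\Dcal_\alpha$ is an order $\alpha$ digital $(t_\alpha,m,s)$-net with a $t_\alpha$-value bounded by $\alpha t+s\alpha(\alpha-1)/2$ uniformly in $m$, which is precisely the assertion for sequences. The main technical obstacle is the combinatorial inequality in the third paragraph, specifically verifying that $\alpha(\alpha-1)/2$ is indeed the correct upper bound on $\sum_{r\in R_j}(\alpha-r)$ across every admissible $R_j$ (including $|R_j|<\alpha$), and teasing the floor $\lfloor s(\alpha-1)/2\rfloor$ out of an integrality argument; the rest of the proof is bookkeeping about which rows the interlacing map selects.
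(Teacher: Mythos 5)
The paper gives no proof of this lemma --- it simply cites \cite[Theorems~4.11 and 4.12]{Dic08} --- so there is nothing internal to compare against; judged on its own, your argument is correct and is essentially a faithful reconstruction of Dick's original proof via the linear-independence (rank) characterization of the $t_\alpha$-value, including the key combinatorial bookkeeping $\alpha D_{\alpha(j-1)+r}=d_r^{*}+(\alpha-r)$ and the resulting $s\alpha(\alpha-1)/2$ overhead. The only point worth making explicit is that the rank characterization you invoke is equivalent to the paper's duality-based definition precisely when the generating matrices have $n\geq\alpha m$ rows, which holds here since the interlaced matrices $D_1,\ldots,D_s$ have exactly $\alpha m$ rows.
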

\noindent 
Since there are many explicit constructions of digital $(t,m,s)$-nets and $(t,s)$-sequences with small $t$-value for an arbitrarily dimension $s$, as described in Subsection~\ref{subsec:standard_constr}, the above lemma from \cite[Theorems~4.11 and 4.12]{Dic08} directly implies that higher order digital nets and sequences can be explicitly constructed.

\begin{remark}
Let $P$ be a digital $(t,m,\alpha s)$-net over $\FF_b$ with generating matrices $C_1,\ldots,C_{\alpha s}\in \FF_b^{m\times m}$. Let $\bsc_i^{(j)}$ denote the $i$-th row vector of $C_j$. For each $1\leq j\leq s$, construct the matrix $D_j\in \FF_b^{\alpha m\times m}$, whose $i$-th row vector is denoted by $\bsd_i^{(j)}$, from the matrices $C_{\alpha(j-1)+1},\ldots, C_{\alpha j}$ as
\[ \bsd_{\alpha(h-1)+i}^{(j)}= \bsc_h^{(\alpha (j-1)+i)},\quad \text{for $1\leq h\leq m$ and $1\leq i\leq \alpha$.}\]
Then the set $\Dcal_{\alpha}(P)$ is a digital net over $\FF_b$ with generating matrices $D_1,\ldots,D_s$. 

Similarly, the sequence $\Dcal_{\alpha}(\Scal)$ can be identified with a digital sequence over $\FF_b$ with generating matrices $D_1,\ldots,D_s\in \FF_b^{\NN\times \NN}$ which are constructed from the generating matrices $C_1,\ldots,C_{\alpha s}\in \FF_b^{\NN\times \NN}$ of $\Scal$.
\end{remark}

To construct an interlaced finite point set $\Dcal_{\alpha}(P)$, one can use polynomial lattice point sets in dimension $\alpha s$ instead of digital $(t,m,\alpha s)$-nets. The resulting point set $\Dcal_{\alpha}(P)$ is called an \emph{interlaced polynomial lattice point set}, and has been often used in applications of HoQMC methods, see \cite{DKLNS14,DKLS16,DLS16,DGLS17,GHS18a,GHS18b,GP18,DGLS19}. Here we need to find good generating vectors $\bsq=(q_1,\ldots,q_{\alpha s})$, but the digit interlacing composition makes it nontrivial whether each component $q_j$ can be searched for one-by-one or consecutive $\alpha$ components $q_{\alpha(j-1)+1},\ldots,q_{\alpha j}$ should be searched for simultaneously. The papers \cite{GD,G1} originally gave a justification for employing the former approach, i.e., component-by-component construction.

\section{Walsh functions}\label{sec:Walsh}

\subsection{Definitions}
The Walsh functions were originally introduced by Walsh \cite{Wal23} and have been studied thereafter, for instance, in \cite{Fin49,Chr55}. In what follows, let $\omega_b$ denote the primitive $b$-th root of unity $\exp(2\pi \sqrt{-1}/b)$. The one-dimensional Walsh functions are defined as follows.
\begin{definition}
Let $k\in \NN_0$ with $b$-adic expansion $k=\sum_{i=0}^{\infty}\kappa_ib^i$, where all but a finite number of $\kappa_i$ are 0. The $k$-th $b$-adic Walsh function $\wal_k\colon [0,1]\to \{1,\omega_b,\ldots,\omega_b^{b-1}\}$ is defined by
\[ \wal_k (x) := \omega_b^{\kappa_0\xi_1+\kappa_1\xi_2+\cdots},\]
where the unique $b$-adic expansion of $x\in [0,1]$ is denoted by $x=\sum_{i=1}^{\infty}\xi_i b^{-i}$.
\end{definition}
\noindent It is clear from the definition that every Walsh function is piecewise constant since it depends only on some finite number of the digits $\xi_i$. Multi-dimensional Walsh functions are given by generalizing the one-dimensional Walsh functions:
\begin{definition}
Let $s\geq 1$. For $\bsk=(k_1,\ldots,k_s)\in \NN_0^s$, the $\bsk$-th $b$-adic Walsh function $\wal_{\bsk}\colon [0,1]^s\to \{1,\omega_b,\ldots,\omega_b^{b-1}\}$ is defined by
\[ \wal_{\bsk} (\bsx) := \prod_{j=1}^{s}\wal_{k_j}(x_j). \]
\end{definition}

Several important properties of the Walsh functions are listed below, see \cite[Appendix~A.2]{DPbook} for the proof.
\begin{lemma}\label{lem:walsh} The following holds true:
\begin{enumerate}
\item For $\bsk,\bsl\in \NN_0^s$ and $\bsx,\bsy\in [0,1]^s$,
\[ \wal_{\bsk}(\bsx)\wal_{\bsl}(\bsx) = \wal_{\bsk\oplus \bsl}(\bsx) \quad \text{and}\quad \wal_{\bsk}(\bsx)\wal_{\bsk}(\bsy) = \wal_{\bsk}(\bsx\oplus \bsy). \]
\item For $\bsk\in \NN_0^s$, 
\[ \int_{[0,1]^s}\wal_{\bsk}(\bsx)\rd \bsx = \begin{cases} 1 & \text{if $\bsk=\bszero$,} \\ 0 & \text{otherwise.}\end{cases} \]
\item For $\bsk,\bsl\in \NN_0^s$, 
\[ \int_{[0,1]^s}\wal_{\bsk}(\bsx)\overline{\wal_{\bsl}(\bsx)}\rd \bsx = \begin{cases} 1 & \text{if $\bsk=\bsl$,} \\ 0 & \text{otherwise.}\end{cases} \]
\item For any $s\in \NN$, the Walsh system $\{\wal_{\bsk} \mid \bsk\in \NN_0^s\}$ is a complete orthonormal system in $L_2([0,1]^s)$
\end{enumerate}
\end{lemma}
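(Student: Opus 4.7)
The four parts build on one another, so the plan is to prove them in order. For part (1), reduce to the one-dimensional case via the product definition. If $k=\sum_{i}\kappa_i b^i$ and $l=\sum_i \lambda_i b^i$, the digits of $k\oplus l$ are $(\kappa_i+\lambda_i)\bmod b$, and since $\omega_b^b=1$ the exponent $\kappa_i\xi_{i+1}+\lambda_i\xi_{i+1}$ is congruent to $((\kappa_i+\lambda_i)\bmod b)\xi_{i+1}$ modulo $b$. This gives $\wal_k(x)\wal_l(x)=\wal_{k\oplus l}(x)$, and the second identity of (1) follows by the same calculation with the roles of the digits of $k$ and of $x,y$ interchanged.

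For part (2), Fubini reduces the multi-dimensional integral to a product of one-dimensional integrals, so it suffices to show $\int_0^1 \wal_k(x)\rd x = 0$ for $k\neq 0$. Choose $r$ with $b^{r-1}\leq k<b^r$; then $\wal_k(x)$ depends only on the first $r$ digits of $x$, so splitting $[0,1)$ into $b^r$ equal subintervals yields
\[ \int_0^1 \wal_k(x)\rd x = \frac{1}{b^r}\prod_{i=0}^{r-1}\sum_{\xi=0}^{b-1}\omega_b^{\kappa_i\xi}, \]
which vanishes because at least one $\kappa_i$ is nonzero and $\sum_{\xi=0}^{b-1}\omega_b^{\kappa_i\xi}=0$ whenever $\kappa_i\not\equiv 0 \pmod b$. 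Part (3) then follows immediately from (1) and (2). Since each Walsh function takes values in the $b$-th roots of unity, $\overline{\wal_{\bsl}(\bsx)}=\wal_{\bsl'}(\bsx)$, where $\bsl'$ is obtained from $\bsl$ by replacing every $b$-adic digit $\lambda$ with $(-\lambda)\bmod b$. Applying (1) gives $\wal_{\bsk}(\bsx)\overline{\wal_{\bsl}(\bsx)}=\wal_{\bsk\oplus \bsl'}(\bsx)$, and by (2) this integrates to zero unless $\bsk\oplus \bsl'=\bszero$, which is equivalent to $\bsk=\bsl$.

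Part (4) is the main obstacle, since it requires a density argument in addition to orthonormality. The plan is to show that the closed linear span of the Walsh system contains all indicator functions of $b$-adic elementary boxes, and to invoke the standard fact that such step functions are dense in $L_2([0,1]^s)$. For $r\in \NN$, let $V_r$ denote the linear span of $\{\wal_{\bsk} : 0\leq k_j<b^r \text{ for all } j\}$; by the orthonormality established in (3), $\dim V_r = b^{rs}$. Each such $\wal_{\bsk}$ is constant on every box $\prod_{j=1}^{s}[a_j b^{-r},(a_j+1)b^{-r})$ with $0\leq a_j<b^r$, so $V_r$ is contained in the $b^{rs}$-dimensional space $W_r$ of functions constant on these boxes; matching dimensions force $V_r=W_r$. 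Since $\bigcup_{r\geq 1} W_r$ is dense in $L_2([0,1]^s)$, the Walsh system is complete, completing the proof.
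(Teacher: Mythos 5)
Your proof is correct, but note that the paper itself does not prove this lemma at all: it simply cites \cite[Appendix~A.2]{DPbook}, so you have supplied a self-contained argument where the authors deferred to the literature. Your argument is essentially the standard one from that reference: parts (1)--(3) by direct digit manipulation with $\omega_b^b=1$ and the vanishing of the geometric sum $\sum_{\xi=0}^{b-1}\omega_b^{\kappa\xi}$ for $\kappa\not\equiv 0\pmod b$, and part (4) by the dimension-count identification of $\mathrm{span}\{\wal_{\bsk}: 0\le k_j<b^r\}$ with the space of step functions on $b$-adic boxes of side $b^{-r}$, followed by density of such step functions. All steps are sound; the reduction of part (3) to parts (1) and (2) via the digitwise negation $\bsl\mapsto\bsl'$ is clean, and the rank argument $V_r=W_r$ in part (4) is the right way to get completeness without any Fourier-analytic machinery. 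The only point worth flagging is in the second identity of part (1): the digitwise sum $x\oplus y$ can produce an expansion ending in an infinite tail of digits $b-1$, which under the paper's normalization is not the canonical expansion of that real number, so the identity $\wal_{\bsk}(\bsx)\wal_{\bsk}(\bsy)=\wal_{\bsk}(\bsx\oplus\bsy)$ as literally stated can fail on a set of measure zero (this caveat appears explicitly in the cited reference). Since every subsequent use of the identity in the paper is inside an integral or over points of a digital net with finite expansions, this does not affect anything, but a careful write-up should either exclude that null set or interpret $\wal_{\bsk}(\bsx\oplus\bsy)$ via the digit string produced by $\oplus$.
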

\noindent It follows from the fourth assertion of Lemma~\ref{lem:walsh} that we can define the Walsh series of $f\in L_2([0,1]^s)$:
\[ \sum_{\bsk\in \NN_0^s}\hat{f}(\bsk) \wal_{\bsk}(\bsx), \]
where $\hat{f}(\bsk)$ denotes the $\bsk$-th Walsh coefficient of $f$ defined by
\[ \hat{f}(\bsk) := \int_{[0,1]^s}f(\bsx)\overline{\wal_{\bsk}(\bsx)}\rd \bsx. \]
For any continuous function $f$ which satisfies $\sum_{\bsk\in \NN_0^s}|\hat{f}(\bsk)|<\infty$, the above Walsh series of $f$ equals pointwise to $f$ itself, see \cite[Theorem~A.20]{DPbook}.

\subsection{Connection to digital nets}
It follows from the first assertion of Lemma~\ref{lem:walsh} that the Walsh functions hold the following important character property. The proof can be found, for instance, in \cite[Lemma~4.75]{DPbook}.
\begin{lemma}\label{lem:dual-walsh}
Let $P\subset [0,1]^s$ be a digital net over $\FF_b$. For $\bsk\in \NN_0^s$ we have
\[ \sum_{\bsx\in P}\wal_{\bsk}(\bsx) = \begin{cases} |P| & \text{if $\bsk\in P^{\perp}$,} \\ 0 & \text{otherwise.}\end{cases} \]
\end{lemma}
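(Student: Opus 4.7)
The plan is to evaluate the Walsh character $\wal_{\bsk}(\bsx_h)$ at each net point in terms of the generating matrices and the digit vector of $h$, reduce the sum over $P$ to a standard character sum over the additive group $\FF_b^m$, and identify the resulting vanishing condition with the definition of $P^{\perp}$.

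First I would fix $\bsk=(k_1,\ldots,k_s)\in\NN_0^s$ and an arbitrary $\bsx_h\in P$ for $0\le h<b^m$, writing $h=\eta_0+\eta_1 b+\cdots+\eta_{m-1}b^{m-1}$ and $\bseta=(\eta_0,\ldots,\eta_{m-1})^{\top}\in\FF_b^m$. By the definition of a digital net, the digit vector of the $j$-th coordinate is $(\xi_{1,h,j},\ldots,\xi_{n,h,j})^{\top}=C_j\bseta$, and the $b$-adic digits of $x_{h,j}$ beyond the $n$-th vanish. Hence, even if $k_j\ge b^n$, only the first $n$ digits of $k_j$ can contribute to the Walsh value, and from the definition of $\wal_{k_j}$,
\[ \wal_{k_j}(x_{h,j})=\omega_b^{\nu_n(k_j)^{\top}C_j\bseta}=\omega_b^{(C_j^{\top}\nu_n(k_j))^{\top}\bseta}, \]
with the exponent computed in $\FF_b$.

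Multiplying over $j=1,\ldots,s$ and setting $\bsv:=C_1^{\top}\nu_n(k_1)\oplus\cdots\oplus C_s^{\top}\nu_n(k_s)\in\FF_b^m$, which is precisely the vector that appears in the defining condition of $P^{\perp}$, I obtain
\[ \wal_{\bsk}(\bsx_h)=\omega_b^{\bsv^{\top}\bseta}. \]
Since $h\mapsto\bseta$ is a bijection between $\{0,1,\ldots,b^m-1\}$ and $\FF_b^m$, summing over $\bsx\in P$ gives
\[ \sum_{\bsx\in P}\wal_{\bsk}(\bsx)=\sum_{\bseta\in\FF_b^m}\omega_b^{\bsv^{\top}\bseta}=\prod_{i=1}^m\sum_{\eta\in\FF_b}\omega_b^{v_i\eta}. \]
Each one-dimensional factor equals $b$ if $v_i=0$ and $0$ otherwise, so the total sum is $b^m=|P|$ exactly when $\bsv=\bszero$ and is $0$ otherwise. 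Since $\bsv=\bszero$ is the definition of $\bsk\in P^{\perp}$, the claim follows.

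There is no deep obstacle here; the only mildly delicate point is bookkeeping with the transposes and with $k_j$ that have more than $n$ nonzero digits. For the latter, one must note that the high-order digits of $k_j$ interact only with digits $\xi_{i,h,j}$ of $x_{h,j}$ that are forced to vanish, so $\nu_n(k_j)$ does capture all contributions to the Walsh exponent. Once this is observed, the entire argument reduces to the elementary orthogonality of additive characters on $\FF_b$.
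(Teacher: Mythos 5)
Your argument is correct and is exactly the standard character-sum proof that the paper delegates to \cite[Lemma~4.75]{DPbook}: express the Walsh exponent as a bilinear form in the digit vector of $h$, observe that only the first $n$ digits of each $k_j$ can contribute because the net points have precision $n$, and invoke orthogonality of additive characters on $\FF_b^m$ to reduce the sum to the condition $C_1^{\top}\nu_n(k_1)\oplus\cdots\oplus C_s^{\top}\nu_n(k_s)=\bszero$, which is precisely the definition of $\bsk\in P^{\perp}$. No gaps; in particular you correctly handle both the transpose bookkeeping and the high-order digits of $k_j$, and the multiset convention is respected since you sum over $h$ rather than over distinct points.
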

\noindent In what follows, by using this lemma, we show how the Walsh functions play a crucial role in analyzing the QMC integration error. 

As preparation, let us consider a reproducing kernel Hilbert space $H$ equipped with reproducing kernel $K\colon [0,1]^s\times [0,1]^s\to \RR$ and inner product $\langle \cdot, \cdot \rangle_{K}$. The norm of $f\in H$ is simply given by $\|f\|_K = \sqrt{\langle f, f \rangle_{K}}$. The worst-case error of QMC integration using a point set $P$ is defined by
\[ e^{\wor}(H, P) := \sup_{\substack{f\in H\\ \| f\|_K\leq 1}}| I(f)- I(f;P)|, \]
while the initial error is defined as reference by
\[ e^{\wor}(H, 0) := \sup_{\substack{f\in H\\ \| f\|_K\leq 1}}| I(f)|. \]
Both of the initial error and the worst-case error have explicit formulas relying only on $K$ and $P$ as follows, see \cite[Chapter~2.3.3]{DPbook} for the proof.
\begin{proposition}
For a reproducing kernel Hilbert space $H$ whose reproducing kernel satisfies $\int_{[0,1]^s}\sqrt{K(\bsx,\bsx)}\rd \bsx < \infty$, the squared initial error is given by
\[ (e^{\wor}(H, 0))^2 = \int_{[0,1]^{2s}}K(\bsx,\bsy)\rd \bsx \rd \bsy. \]
The squared worst-case error of QMC integration using a point set $P$ is given by
\begin{align}\label{eq:worst-case_error}
 (e^{\wor}(H, P))^2 & = \int_{[0,1]^{2s}}K(\bsx,\bsy)\rd \bsx \rd \bsy - \frac{2}{|P|}\sum_{\bsx\in P} \int_{[0,1]^s}K(\bsx,\bsy)\rd \bsy \notag \\
 & \quad + \frac{1}{|P|^2}\sum_{\bsx,\bsy\in P}K(\bsx,\bsy). 
\end{align}
\end{proposition}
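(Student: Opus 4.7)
The plan is to use the Riesz representation combined with the reproducing property to recast each linear error functional as an inner product against a fixed element of $H$ (its representer), and then invoke Cauchy--Schwarz to identify the worst-case error with the norm of that representer. Concretely, since $f(\bsx)=\langle f, K(\cdot,\bsx)\rangle_K$ for every $\bsx\in[0,1]^s$, both $f(\bsx)$ and its averages are continuous linear functionals of $f$; so are $I(f)$ and $I(f;P)$.

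First I would define the representer of the integral as the Bochner integral
\[ h(\cdot) := \int_{[0,1]^s} K(\cdot,\bsx)\rd\bsx \in H, \]
whose existence in $H$ is guaranteed by the assumption $\int_{[0,1]^s}\sqrt{K(\bsx,\bsx)}\rd\bsx<\infty$, because $\|K(\cdot,\bsx)\|_K=\sqrt{K(\bsx,\bsx)}$. By continuity of the inner product one can swap integration and $\langle f,\cdot\rangle_K$ to obtain $I(f)=\langle f,h\rangle_K$. Cauchy--Schwarz then gives $|I(f)|\leq \|f\|_K\|h\|_K$, with equality attained at $f=h/\|h\|_K$, so $e^{\wor}(H,0)=\|h\|_K$. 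Computing $\|h\|_K^2=\langle h,h\rangle_K$ by once more swapping integration with the inner product and using the reproducing property $\langle h,K(\cdot,\bsy)\rangle_K=h(\bsy)$ yields
\[ (e^{\wor}(H,0))^2 = \int_{[0,1]^s} h(\bsy)\rd\bsy = \int_{[0,1]^{2s}} K(\bsy,\bsx)\rd\bsx\rd\bsy, \]
which is the first claim.

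For the worst-case error I would apply the same representer trick to the error functional $L(f):=I(f)-I(f;P)$. Since $I(f;P)=\frac{1}{|P|}\sum_{\bsx\in P}\langle f,K(\cdot,\bsx)\rangle_K$, one has $L(f)=\langle f,\xi\rangle_K$ with
\[ \xi(\cdot) := h(\cdot) - \frac{1}{|P|}\sum_{\bsx\in P}K(\cdot,\bsx) \in H. \]
Cauchy--Schwarz and its sharpness give $e^{\wor}(H,P)=\|\xi\|_K$. Expanding $\|\xi\|_K^2=\langle\xi,\xi\rangle_K$ bilinearly produces three contributions: (i) $\langle h,h\rangle_K$, which by the computation above equals the double integral of $K$; (ii) the cross term $-\frac{2}{|P|}\sum_{\bsx\in P}\langle h,K(\cdot,\bsx)\rangle_K$, which by the reproducing property equals $-\frac{2}{|P|}\sum_{\bsx\in P}h(\bsx)=-\frac{2}{|P|}\sum_{\bsx\in P}\int_{[0,1]^s}K(\bsx,\bsy)\rd\bsy$; and (iii) $\frac{1}{|P|^2}\sum_{\bsx,\bsy\in P}\langle K(\cdot,\bsx),K(\cdot,\bsy)\rangle_K=\frac{1}{|P|^2}\sum_{\bsx,\bsy\in P}K(\bsx,\bsy)$. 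Summing these three terms recovers \eqref{eq:worst-case_error}.

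The only genuinely delicate step is the Bochner-integral justification of the exchange between the $H$-inner product and the outer integral in $\bsx$: this is where the hypothesis $\int_{[0,1]^s}\sqrt{K(\bsx,\bsx)}\rd\bsx<\infty$ is used, since it certifies that $\bsx\mapsto K(\cdot,\bsx)$ is Bochner-integrable as an $H$-valued map and, consequently, that $h$ lies in $H$ and commutes with $\langle f,\cdot\rangle_K$ via the continuous linear functional $\langle f,\cdot\rangle_K$. All remaining manipulations are straightforward bilinear expansions and direct applications of the reproducing property.
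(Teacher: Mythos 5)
Your proof is correct and is essentially the standard representer argument: the paper itself does not prove this proposition but defers to \cite[Chapter~2.3.3]{DPbook}, where exactly this Riesz-representer/Cauchy--Schwarz computation is carried out, including the use of $\int_{[0,1]^s}\sqrt{K(\bsx,\bsx)}\rd \bsx<\infty$ to guarantee that $\bsx\mapsto K(\cdot,\bsx)$ is Bochner integrable. The only cosmetic remark is that your double integral appears as $\int K(\bsy,\bsx)$ rather than $\int K(\bsx,\bsy)$, which is harmless by the symmetry of the reproducing kernel.
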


Now let us consider the Walsh series of a reproducing kernel $K$:
\[ \sum_{\bsk,\bsl\in \NN_0^s}\hat{K}(\bsk,\bsl)\wal_{\bsk}(\bsx)\overline{\wal_{\bsl}(\bsy)}, \]
where the $(\bsk,\bsl)$-th Walsh coefficient $\hat{K}(\bsk,\bsl)$ is defined by
\[ \hat{K}(\bsk,\bsl) = \int_{[0,1]^{2s}}K(\bsx,\bsy)\overline{\wal_{\bsk}(\bsx)}\wal_{\bsl}(\bsy)\rd \bsx \rd \bsy. \]
Again the pointwise equality holds between the above Walsh series and $K$ itself if $K$ is continuous and $\sum_{\bsk,\bsl\in \NN_0^s}|\hat{K}(\bsk,\bsl)|<\infty$. 

The following proposition provides a simple expression of the squared worst-case error when the point set is a \emph{digitally shifted digital net}
\[ P\oplus \bsdelta = \{\bsx\oplus \bsdelta \mid \bsx\in P\},\]
where $P$ is a digital net over $\FF_b$ and $\bsdelta\in [0,1]^s$. As far as the authors know, this result is not available in the literature in this full generality, so that we provide a proof for the sake of completeness.
\begin{proposition}\label{prop:worst-case-error_dual}
Let $P$ be a digital net over $\FF_b$ and $\bsdelta\in [0,1]^s$. For a reproducing kernel Hilbert space $H$ whose reproducing kernel $K$ is continuous and satisfies $\int_{[0,1]^s}\sqrt{K(\bsx,\bsx)}\rd \bsx < \infty$ and $\sum_{\bsk,\bsl\in \NN_0^s}|\hat{K}(\bsk,\bsl)|<\infty$, we have
\[ (e^{\wor}(H, P\oplus \bsdelta))^2 = \sum_{\bsk,\bsl\in P^{\perp}\setminus \{\bszero\}}\hat{K}(\bsk,\bsl)\wal_{\bsk}(\bsdelta)\overline{\wal_{\bsl}(\bsdelta)}. \]
\end{proposition}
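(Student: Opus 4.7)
The plan is to insert the pointwise-valid Walsh expansion
\[ K(\bsx,\bsy)=\sum_{\bsk,\bsl\in\NN_0^s}\hat{K}(\bsk,\bsl)\wal_{\bsk}(\bsx)\overline{\wal_{\bsl}(\bsy)} \]
into the three-term formula \eqref{eq:worst-case_error} applied to the shifted net $P\oplus\bsdelta$, and then evaluate each of the three terms separately using the orthogonality statements in Lemma~\ref{lem:walsh} together with the character identity of Lemma~\ref{lem:dual-walsh}. The pointwise convergence of the Walsh series to $K$ and the Fubini-type exchange of summation with the finite-sum and unit-cube integrations are both justified by the standing hypothesis $\sum_{\bsk,\bsl}|\hat{K}(\bsk,\bsl)|<\infty$ via \cite[Theorem~A.20]{DPbook}.

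The constant term $\int_{[0,1]^{2s}}K(\bsx,\bsy)\rd\bsx\rd\bsy$ collapses to $\hat{K}(\bszero,\bszero)$ by the second part of Lemma~\ref{lem:walsh}. For the linear term, since the reproducing kernel is symmetric, I would first rewrite $-\frac{2}{|P|}\sum_{\bsx\in P\oplus\bsdelta}\int K(\bsx,\bsy)\rd\bsy$ in the symmetric form $-\frac{1}{|P|}\sum_{\bsx\in P\oplus\bsdelta}\int K(\bsx,\bsy)\rd\bsy-\frac{1}{|P|}\sum_{\bsy\in P\oplus\bsdelta}\int K(\bsx,\bsy)\rd\bsx$. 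Using the multiplicativity $\wal_{\bsk}(\bsx\oplus\bsdelta)=\wal_{\bsk}(\bsx)\wal_{\bsk}(\bsdelta)$ together with Lemma~\ref{lem:dual-walsh}, each half reduces to a single sum over the dual net: the first produces $-\sum_{\bsk\in P^{\perp}}\hat{K}(\bsk,\bszero)\wal_{\bsk}(\bsdelta)$ and the second produces $-\sum_{\bsl\in P^{\perp}}\hat{K}(\bszero,\bsl)\overline{\wal_{\bsl}(\bsdelta)}$. In the quadratic term, the inner double character sum factorizes as a product of two Lemma~\ref{lem:dual-walsh}-sums, yielding $\sum_{\bsk,\bsl\in P^{\perp}}\hat{K}(\bsk,\bsl)\wal_{\bsk}(\bsdelta)\overline{\wal_{\bsl}(\bsdelta)}$.

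It then remains to collect the three contributions and partition the quadratic sum according to whether $\bsk=\bszero$ and/or $\bsl=\bszero$. The two pieces with exactly one of $\bsk,\bsl$ equal to $\bszero$ cancel against the two linear sums, the $(\bszero,\bszero)$ diagonal piece cancels with the constant $\hat{K}(\bszero,\bszero)$, and what survives is precisely the claimed double sum over $\bsk,\bsl\in P^{\perp}\setminus\{\bszero\}$. I do not expect any substantive obstacle: the only mild subtlety is the symmetrization of the linear term described above, and everything else is careful bookkeeping of the $\bszero$-indexed boundary terms together with routine justification of the termwise interchanges.
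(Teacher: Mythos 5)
Your proposal is correct and follows essentially the same route as the paper's own proof: Walsh-expand $K$ inside the three-term worst-case error formula, symmetrize the linear term using the symmetry of $K$, reduce each piece via the orthogonality relations of Lemma~\ref{lem:walsh} and the character property of Lemma~\ref{lem:dual-walsh}, and cancel the $\bszero$-indexed boundary terms. The interchange justifications you cite and the final bookkeeping match the paper's argument, so there is nothing to add.
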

\begin{proof}
It is trivial from the definition of the Walsh functions that the first term on the right-hand side of \eqref{eq:worst-case_error} is equal to $\hat{K}(\bszero,\bszero)$. For the second term on the right-hand side of \eqref{eq:worst-case_error}, by using the symmetry of $K$, the first and second assertions of Lemma~\ref{lem:walsh} and Lemma~\ref{lem:dual-walsh}, we have
\begin{align*}
& \frac{2}{|P|}\sum_{\bsx\in P} \int_{[0,1]^s}K(\bsx\oplus \bsdelta,\bsy)\rd \bsy \\
& = \frac{1}{|P|}\sum_{\bsx\in P} \int_{[0,1]^s}K(\bsx\oplus \bsdelta,\bsy)\rd \bsy +\frac{1}{|P|}\sum_{\bsx\in P} \int_{[0,1]^s}K(\bsy,\bsx\oplus \bsdelta)\rd \bsy \\
& = \frac{1}{|P|}\sum_{\bsx\in P} \sum_{\bsk,\bsl\in \NN_0^s}\hat{K}(\bsk,\bsl)\wal_{\bsk}(\bsx\oplus \bsdelta)\int_{[0,1]^s}\overline{\wal_{\bsl}(\bsy)} \rd \bsy \\
& \quad + \frac{1}{|P|}\sum_{\bsx\in P} \sum_{\bsk,\bsl\in \NN_0^s}\hat{K}(\bsk,\bsl)\overline{\wal_{\bsl}(\bsx\oplus \bsdelta)}\int_{[0,1]^s}\wal_{\bsk}(\bsy) \rd \bsy \\
& = \sum_{\bsk\in \NN_0^s}\hat{K}(\bsk,\bszero)\wal_{\bsk}(\bsdelta)\frac{1}{|P|}\sum_{\bsx\in P} \wal_{\bsk}(\bsx) \\
& \quad + \sum_{\bsl\in \NN_0^s}\hat{K}(\bszero,\bsl)\overline{\wal_{\bsl}(\bsdelta)}\frac{1}{|P|}\sum_{\bsx\in P} \overline{\wal_{\bsl}(\bsx)} \\
& = \sum_{\bsk\in P^{\perp}}\hat{K}(\bsk,\bszero)\wal_{\bsk}(\bsdelta) + \sum_{\bsl\in P^{\perp}}\hat{K}(\bszero,\bsl)\overline{\wal_{\bsl}(\bsdelta)}.
\end{align*}
Finally, for the third term on the right-hand side of \eqref{eq:worst-case_error}, by using the first assertion of Lemma~\ref{lem:walsh} and Lemma~\ref{lem:dual-walsh}, we have
\begin{align*}
& \frac{1}{|P|^2}\sum_{\bsx,\bsy\in P}K(\bsx\oplus \bsdelta,\bsy\oplus \bsdelta) \\
& = \frac{1}{|P|^2}\sum_{\bsx,\bsy\in P}\sum_{\bsk,\bsl\in \NN_0^s}\hat{K}(\bsk,\bsl)\wal_{\bsk}(\bsx\oplus \bsdelta)\overline{\wal_{\bsl}(\bsy\oplus \bsdelta)} \\
& = \sum_{\bsk,\bsl\in \NN_0^s}\hat{K}(\bsk,\bsl)\wal_{\bsk}(\bsdelta)\overline{\wal_{\bsl}(\bsdelta)}\frac{1}{|P|}\sum_{\bsx\in P}\wal_{\bsk}(\bsx)\frac{1}{|P|}\sum_{\bsy\in P}\overline{\wal_{\bsl}(\bsy)} \\
& = \sum_{\bsk,\bsl\in P^{\perp}}\hat{K}(\bsk,\bsl)\wal_{\bsk}(\bsdelta)\overline{\wal_{\bsl}(\bsdelta)}.
\end{align*}
Altogether we obtain
\begin{align*}
(e^{\wor}(H, P\oplus \bsdelta))^2 & = \hat{K}(\bszero,\bszero)-\sum_{\bsk\in P^{\perp}}\hat{K}(\bsk,\bszero)\wal_{\bsk}(\bsdelta) - \sum_{\bsl\in P^{\perp}}\hat{K}(\bszero,\bsl)\overline{\wal_{\bsl}(\bsdelta)} \\
& \quad + \sum_{\bsk,\bsl\in P^{\perp}}\hat{K}(\bsk,\bsl)\wal_{\bsk}(\bsdelta)\overline{\wal_{\bsl}(\bsdelta)}\\
& = \sum_{\bsk,\bsl\in P^{\perp}\setminus \{\bszero\}}\hat{K}(\bsk,\bsl)\wal_{\bsk}(\bsdelta)\overline{\wal_{\bsl}(\bsdelta)},
\end{align*}
which completes the proof.
\end{proof}

Using Proposition~\ref{prop:worst-case-error_dual} we obtain the following result.
\begin{corollary}\label{cor:worst-case-error_dual}
Let $P$ be a digital net over $\FF_b$ and $H$ be a reproducing kernel Hilbert space whose reproducing kernel $K$ is continuous and satisfies $\int_{[0,1]^s}\sqrt{K(\bsx,\bsx)}\rd \bsx < \infty$ and $\sum_{\bsk,\bsl\in \NN_0^s}|\hat{K}(\bsk,\bsl)|<\infty$. Then we have
\[ (e^{\wor}(H, P))^2 = \sum_{\bsk,\bsl \in P^{\perp}\setminus \{\bszero\}}\hat{K}(\bsk,\bsl), \]
and
\[ \int_{[0,1]^s}(e^{\wor}(H, P\oplus \bsdelta))^2\rd \bsdelta = \sum_{\bsk\in P^{\perp}\setminus \{\bszero\}}\hat{K}(\bsk,\bsk). \]
\end{corollary}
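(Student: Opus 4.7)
The plan is to derive both identities directly from Proposition~\ref{prop:worst-case-error_dual} by making two specific choices/operations on the shift $\bsdelta$, using only elementary properties of the Walsh functions.

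For the first identity, I would simply set $\bsdelta = \bszero$ in Proposition~\ref{prop:worst-case-error_dual}. Since $P \oplus \bszero = P$ and every Walsh function satisfies $\wal_{\bsk}(\bszero) = 1$ (directly from the definition, as the $b$-adic expansion of $0$ has all digits equal to zero), every product $\wal_{\bsk}(\bsdelta)\overline{\wal_{\bsl}(\bsdelta)}$ collapses to $1$, leaving exactly $\sum_{\bsk,\bsl \in P^{\perp}\setminus \{\bszero\}}\hat{K}(\bsk,\bsl)$.

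For the second identity, I would integrate the expression from Proposition~\ref{prop:worst-case-error_dual} over $\bsdelta \in [0,1]^s$, exchange the sum and the integral, and apply the Walsh orthogonality relation (the third assertion of Lemma~\ref{lem:walsh}):
\[
\int_{[0,1]^s} \wal_{\bsk}(\bsdelta)\overline{\wal_{\bsl}(\bsdelta)} \rd \bsdelta = \begin{cases} 1 & \text{if } \bsk=\bsl, \\ 0 & \text{otherwise.}\end{cases}
\]
This kills every off-diagonal term in the double sum, leaving precisely $\sum_{\bsk\in P^{\perp}\setminus \{\bszero\}}\hat{K}(\bsk,\bsk)$.

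The only nontrivial point is justifying the Fubini-style interchange of summation and integration. This is where I would use the hypothesis $\sum_{\bsk,\bsl\in \NN_0^s}|\hat{K}(\bsk,\bsl)|<\infty$: since $|\wal_{\bsk}(\bsdelta)\overline{\wal_{\bsl}(\bsdelta)}|=1$ pointwise, the series $\sum_{\bsk,\bsl \in P^{\perp}\setminus \{\bszero\}}\hat{K}(\bsk,\bsl)\wal_{\bsk}(\bsdelta)\overline{\wal_{\bsl}(\bsdelta)}$ converges absolutely uniformly in $\bsdelta$, so termwise integration is valid. There is no substantive obstacle; the corollary is essentially a bookkeeping consequence of Proposition~\ref{prop:worst-case-error_dual} together with Walsh orthogonality.
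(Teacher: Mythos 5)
Your proposal is correct and matches the paper's own proof: the first identity is obtained by setting $\bsdelta=\bszero$ in Proposition~\ref{prop:worst-case-error_dual}, and the second by integrating over $\bsdelta$ and invoking the Walsh orthogonality relation from the third assertion of Lemma~\ref{lem:walsh}. Your additional remark justifying the interchange of sum and integral via $\sum_{\bsk,\bsl}|\hat{K}(\bsk,\bsl)|<\infty$ is a small but welcome bit of rigor that the paper leaves implicit.
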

\begin{proof}
The first assertion follows immediately from Proposition~\ref{prop:worst-case-error_dual} by considering the case $\bsdelta=\bszero$. For the second assertion, it follows from Proposition~\ref{prop:worst-case-error_dual} and the third assertion of Lemma~\ref{lem:walsh} that
\begin{align*}
\int_{[0,1]^s}(e^{\wor}(H, P\oplus \bsdelta))^2\rd \bsdelta & = \sum_{\bsk,\bsl\in P^{\perp}\setminus \{\bszero\}}\hat{K}(\bsk,\bsl)\int_{[0,1]^s}\wal_{\bsk}(\bsdelta)\overline{\wal_{\bsl}(\bsdelta)}\rd \bsdelta \\
& =  \sum_{\bsk\in P^{\perp}\setminus \{\bszero\}}\hat{K}(\bsk,\bsk).
\end{align*}
Thus we are done.
\end{proof}

We would like to emphasize that the worst-case error of QMC rule using a digital net is given by the double sum of the Walsh coefficients of reproducing kernel, whereas the shift-averaged worst-case error is simply given by the single sum of their diagonal elements. This way applying a random digital shift has an effect on vanishing all non-diagonal terms, which sometimes makes an error analysis much easier as we shall see in the subsequent sections.

\section{Discrepancy}\label{sec:discrepancy}

\subsection{Definitions}
As represented by the Koksma-Hlawka inequality \eqref{eq:KH-inequ}, the star-discrepancy, or more generally speaking, the $L_p$-discrepancy is an extremely important quantitative measure of how uniformly a point set is distributed over $[0,1]^s$.
\begin{definition}\label{def:discrepancy}
Let $P$ be a point set in $[0,1]^s$. For $\bsy=(y_1,\ldots,y_s)\in [0,1]^s$, we write $[\bszero,\bsy)=[0,y_1)\times \cdots \times [0,y_s)$ and define the so-called \emph{local discrepancy function}
\[ \Delta_P(\bsy) := \frac{1}{|P|}\sum_{\bsx\in P}\bsone_{[\bszero,\bsy)}(\bsx)-\prod_{j=1}^{s}y_j, \]
where $\bsone_{[\bszero,\bsy)}$ denotes the indicator function which is equal to 1 if $\bsx\in [\bszero,\bsy)$, and 0 otherwise. For $1\leq p\leq \infty$, the $L_p$-discrepancy of $P$ is defined as the $L_p$-norm of $\Delta_P(\bsy)$, i.e.,
\[ L_p(P) := \left( \int_{[0,1]^s}|\Delta_P(\bsy)|^p\rd \bsy\right)^{1/p}, \]
with the obvious modification for $p=\infty$.
\end{definition}
\noindent We speak of the star-discrepancy if $p=\infty$ and the different notions such as $D^{*}(P)$ have been often used in the literature.

\subsection{$L_2$-discrepancy and worst-case error}
In this section, we are particularly concerned with the $L_2$-discrepancy. Here we follow the exposition of \cite[Chapter~2.4]{DPbook} to give a connection between the $L_2$-discrepancy and the worst-case error for some reproducing kernel Hilbert space. 

Let $H^{\downarrow}$ denote the reproducing kernel Hilbert space whose reproducing kernel is given by
\[ K^{\downarrow}(\bsx,\bsy) = \prod_{j=1}^{s}\min(1-x_j,1-y_j). \]
It is known from \cite[Section~8]{Aro50} that the function space $H^{\downarrow}$ is the $s$-fold tensor product of the univariate reproducing kernel Hilbert space with reproducing kernel $K^{\downarrow}(x,y) = \min(1-x,1-y)$. Moreover, $H^{\downarrow}$ contains all absolutely continuous functions which satisfy
\[ \frac{\partial^{|u|}f}{\partial \bsx_u}(\bsx_u,\bsone)=0\quad \text{for all $\emptyset = u\subsetneq \{1,\ldots,s\}$,}\]
where we write $\bsx_u=(x_j)_{j\in u}$ and $(\bsx_u,\bsone)$ is the vector whose $j$-th component is equal to $x_j$ if $j\in u$, and 1 otherwise. Note that the symbol $\downarrow$ is used to represent this ``anchor'' property of the space. The inner product is given by
\[ \langle f,g\rangle_{K^{\downarrow}} = \int_{[0,1]^s}\frac{\partial^{s}f}{\partial \bsx}(\bsx)\frac{\partial^{s}g}{\partial \bsx}(\bsx)\rd \bsx, \]
for $f,g\in H^{\downarrow}$. Then the following result is known.
\begin{lemma}For any point set $P\subset [0,1]^s$, we have
\[ L_2(P) = e^{\wor}(H^{\downarrow}, P). \]
\end{lemma}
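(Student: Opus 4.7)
The plan is to apply the closed-form expression for the squared worst-case error given in \eqref{eq:worst-case_error} with $K=K^{\downarrow}$, and massage each of the three terms into an integral of a polynomial in the local discrepancy ingredients. The key observation I would exploit is the integral representation of the kernel,
\[ K^{\downarrow}(\bsx,\bsy) = \prod_{j=1}^{s}\min(1-x_j,1-y_j) = \int_{[0,1]^s}\bsone_{[\bszero,\bst)}(\bsx)\,\bsone_{[\bszero,\bst)}(\bsy)\rd \bst, \]
which follows coordinatewise from $\min(1-x,1-y)=\int_{\max(x,y)}^{1}\rd t$ together with the (a.e.) identity $\bsone_{[\max(x,y),1]}(t)=\bsone_{[\bszero,\bst)}(\bsx)\bsone_{[\bszero,\bst)}(\bsy)$ in each variable.

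Next I would substitute this representation into \eqref{eq:worst-case_error} and swap the order of integration/summation via Fubini, which is legitimate since all integrands are bounded and nonnegative. The first double integral becomes
\[ \int_{[0,1]^{2s}}K^{\downarrow}(\bsx,\bsy)\rd \bsx \rd \bsy = \int_{[0,1]^s}\left(\prod_{j=1}^{s}t_j\right)^{\!2}\rd \bst; \]
the cross term collapses to
\[ \frac{2}{|P|}\sum_{\bsx\in P}\int_{[0,1]^s}K^{\downarrow}(\bsx,\bsy)\rd \bsy = 2\int_{[0,1]^s}\left(\prod_{j=1}^{s}t_j\right)\!\left(\frac{1}{|P|}\sum_{\bsx\in P}\bsone_{[\bszero,\bst)}(\bsx)\right)\rd \bst; \]
and the point-set double sum becomes
\[ \frac{1}{|P|^2}\sum_{\bsx,\bsy\in P}K^{\downarrow}(\bsx,\bsy) = \int_{[0,1]^s}\left(\frac{1}{|P|}\sum_{\bsx\in P}\bsone_{[\bszero,\bst)}(\bsx)\right)^{\!2}\rd \bst. \]

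Assembling the three pieces with signs $+,-,+$ as in \eqref{eq:worst-case_error}, the integrand in $\bst$ becomes the perfect square $\Delta_P(\bst)^2$ by the definition of $\Delta_P$ in Definition~\ref{def:discrepancy}. Hence
\[ (e^{\wor}(H^{\downarrow},P))^2 = \int_{[0,1]^s}\Delta_P(\bst)^2\rd \bst = L_2(P)^2, \]
which yields the claim after taking square roots. I do not expect a real obstacle; the only mildly delicate point is the measure-zero boundary between open and closed indicator sets, but since everything is handled under Lebesgue integration this is harmless. The applicability of the squared worst-case error formula is also unproblematic because $K^{\downarrow}$ is continuous and bounded on $[0,1]^{2s}$, so in particular $\int_{[0,1]^s}\sqrt{K^{\downarrow}(\bsx,\bsx)}\rd\bsx<\infty$.
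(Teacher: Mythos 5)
Your proof is correct. Note that the paper does not actually supply a proof of this lemma: it is stated as a known fact with a pointer to the literature (the exposition of Chapter~2.4 of \cite{DPbook}), so your argument is a genuine, self-contained addition rather than a reproduction. Your route is the Warnock-type computation: you write the kernel coordinatewise as \( \min(1-x,1-y)=\int_0^1 \bsone_{[0,t)}(x)\bsone_{[0,t)}(y)\rd t \) (exact up to the single point \(t=\max(x,y)\), hence harmless under Lebesgue integration), substitute into \eqref{eq:worst-case_error}, interchange the finite sums with the bounded integrals, and recognize the resulting integrand as the perfect square \( \bigl(\tfrac{1}{|P|}\sum_{\bsx\in P}\bsone_{[\bszero,\bsy)}(\bsx)-\prod_j y_j\bigr)^2=\Delta_P(\bsy)^2 \). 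All three terms check out, and the hypothesis of the worst-case error formula is satisfied since \(K^{\downarrow}\) is continuous and bounded on \([0,1]^{2s}\). The classical alternative derivation proceeds through the Hlawka--Zaremba identity, expressing \(I(f)-I(f;P)\) as \((-1)^s\int_{[0,1]^s}\Delta_P(\bsy)\,\partial^s f/\partial\bsy \rd\bsy\) for \(f\in H^{\downarrow}\) and then applying Cauchy--Schwarz in the inner product \(\langle\cdot,\cdot\rangle_{K^{\downarrow}}\), with equality attained at the representer; that version makes the ``why'' of the anchor condition at \(\bsone\) more transparent, whereas yours is shorter and needs nothing beyond the kernel formula \eqref{eq:worst-case_error} already quoted in the paper. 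Either is acceptable here.
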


Therefore, when $P$ is a digital net over $\FF_b$, combining this identity with Corollary~\ref{cor:worst-case-error_dual} gives an expression of the squared $L_2$-discrepancy:
\[ (L_2(P))^2 = \sum_{\bsk,\bsl \in P^{\perp}\setminus \{\bszero\}}\widehat{K^{\downarrow}}(\bsk,\bsl), \]
and also an expression of the shift-averaged squared $L_2$-discrepancy:
\[ \int_{[0,1]^s}(L_2(P\oplus \bsdelta))^2\rd \bsdelta = \sum_{\bsk\in P^{\perp}\setminus \{\bszero\}}\widehat{K^{\downarrow}}(\bsk,\bsk). \]

In \cite[Lemma~2.2]{DP14}, Dick and Pillichshammer conducted an exact evaluation of the Walsh coefficients $\widehat{K^{\downarrow}}(\bsk,\bsl)$ for all $\bsk,\bsl\in \NN_0^s$ when $b=2$. Here we simplify their result in a way that will be sufficient for readers to understand the proof of optimal order $L_2$-discrepancy bounds.
\begin{lemma}\label{lem:walsh_L2disc}
For $\bsk,\bsl\in \NN_0^s$, we have 
\begin{enumerate}
\item $|\widehat{K^{\downarrow}}(\bsk,\bsl)| \leq 3^{-s}\cdot 2^{-\mu_1(\bsk)-\mu_1(\bsl)}$.
\item $\widehat{K^{\downarrow}}(\bsk,\bsl)=0$ if $(\bsk,\bsl)\in T_{\geq 3}$.
\end{enumerate}
\end{lemma}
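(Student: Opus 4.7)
The plan is first to reduce via tensor factorization to two univariate claims, then to analyze the 1D Walsh coefficient $\hat K(k,l):=\int_0^1\int_0^1\min(1-x,1-y)\wal_k(x)\wal_l(y)\,\rd x\,\rd y$ through the integral representation $\hat K(k,l)=\int_0^1\Phi_k(u)\Phi_l(u)\,\rd u$, where $\Phi_k(u):=\int_0^u\wal_k(x)\,\rd x$. This identity is obtained by writing $\min(1-x,1-y)=\int_0^1\bsone_{x<u}\bsone_{y<u}\,\rd u$ and applying Fubini.

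Since $K^{\downarrow}(\bsx,\bsy)=\prod_{j=1}^s\min(1-x_j,1-y_j)$ and $\wal_{\bsk}$ factors over coordinates, Fubini also gives $\widehat{K^{\downarrow}}(\bsk,\bsl)=\prod_{j=1}^s\hat K(k_j,l_j)$. Additivity of $\mu_1$ reduces the first assertion to the univariate inequality $|\hat K(k,l)|\leq 3^{-1}\cdot 2^{-\mu_1(k)-\mu_1(l)}$, and by the definition of $T_{\geq 3}$ for vectors the second assertion reduces to $\hat K(k,l)=0$ whenever $(k,l)$ is of type $(p,q)$ with $p+q\geq 3$.

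For the bound: $\Phi_0(u)=u$, and for $k>0$ with $\mu_1(k)=c_1$ the function $\Phi_k$ vanishes at every multiple of $2^{1-c_1}$ and forms a signed triangle of amplitude $2^{-c_1}$ on each period of length $2^{1-c_1}$. A direct calculation using this structure yields the exact identity $\int_0^1\Phi_k(u)^2\,\rd u=2^{-2\mu_1(k)}/3$ for every $k\in\NN_0$. Cauchy--Schwarz then gives $|\hat K(k,l)|\leq\|\Phi_k\|_{L^2}\|\Phi_l\|_{L^2}=3^{-1}\cdot 2^{-\mu_1(k)-\mu_1(l)}$, and the $s$-dimensional bound follows by taking products over coordinates.

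For the vanishing, first peel off the common low part $m:=k^{(p)}=l^{(q)}$: writing $k=k_*+m$, $l=l_*+m$ and letting $e$ be the position of the top bit of $m$ (or $e=0$ if $m=0$), the fact that $\wal_m$ is constant on each dyadic block $[j\cdot 2^{-e},(j+1)\cdot 2^{-e})$ while $\Phi_k,\Phi_l$ vanish at its endpoints yields, via a change of variables, the reduction $\hat K(k,l)=2^{-2e}\,\hat K(\bar k,\bar l)$ with $\bar k:=k_*/2^e$, $\bar l:=l_*/2^e$. In the remaining case $m=0$ with $p+q\geq 3$ bits in $(\bar k,\bar l)$, two sub-cases arise. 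If $\bar k=0$ (so $\bar l$ has at least three nonzero bits), then $\hat K(0,\bar l)=-\tfrac{1}{2}\widehat{u^2}(\bar l)$; squaring the Walsh expansion $1-u=\tfrac{1}{2}+\sum_{c\geq 1}2^{-c-1}\wal_{2^{c-1}}(u)$ shows that the Walsh coefficients of $u^2$ vanish at every index with three or more nonzero bits. If both $\bar k,\bar l\neq 0$, the identity $\Phi_k=\wal_{k-2^{c_1-1}}\Phi_{2^{c_1-1}}$ rewrites $\hat K(\bar k,\bar l)$ as a single Walsh coefficient of the product $\Phi_{2^{c-1}}\Phi_{2^{d-1}}$ (with $c,d$ the leading-bit positions), whose Walsh expansion is heavily constrained by the periodicity of each factor; it remains to verify that the type condition places $\bar k'\oplus\bar l'$ outside the support of these constraints. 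This last verification is the main obstacle, as it requires an iterative case analysis that constitutes the bulk of \cite[Lemma~2.2]{DP14}; an equivalent route is simply to invoke their explicit closed-form formula for $\widehat{K^{\downarrow}}(\bsk,\bsl)$ and read off both assertions directly.
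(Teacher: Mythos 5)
First, a point of reference: the paper does not actually prove this lemma --- it states it as a simplified form of the exact evaluation in \cite[Lemma~2.2]{DP14} and refers the reader there. So any self-contained argument goes beyond the text. Your proof of the first assertion is complete, correct, and arguably cleaner than extracting the bound from the exact formula: the factorization $\widehat{K^{\downarrow}}(\bsk,\bsl)=\prod_{j=1}^s\hat K(k_j,l_j)$, the representation $\min(1-x,1-y)=\int_0^1\bsone_{x<u}\bsone_{y<u}\rd u$ giving $\hat K(k,l)=\int_0^1\Phi_k\Phi_l\rd u$, the exact identity $\int_0^1\Phi_k^2\rd u=2^{-2\mu_1(k)}/3$ (which does follow from the triangle-wave structure of $\Phi_k$, with period $2^{1-\mu_1(k)}$ and amplitude $2^{-\mu_1(k)}$), and Cauchy--Schwarz reproduce the constant $3^{-s}$ exactly. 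That part stands on its own.

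The second assertion is where there are genuine gaps. (i) The peeling identity $\hat K(k,l)=2^{-2e}\hat K(\bar k,\bar l)$ is only justified when $p\geq 1$ \emph{and} $q\geq 1$: it uses that both $\Phi_k$ and $\Phi_l$ vanish at every multiple of $2^{-e}$, which requires $\mu_1(k)>e$ and $\mu_1(l)>e$. For a type-$(0,q)$ pair with $k=m\neq 0$ one has $\mu_1(k)=e$, so $\Phi_k$ vanishes only at multiples of $2^{1-e}$ and the stated reduction is false. The conclusion still holds, but by a separate computation (on each block $\Phi_k$ is affine, and both $\int_0^1\Phi_{\bar l}(v)\rd v$ and $\int_0^1 v\,\Phi_{\bar l}(v)\rd v$ vanish once $\bar l$ has at least three nonzero digits), which you would need to supply. (ii) More seriously, the main case $\bar k,\bar l\neq 0$ with $p+q\geq 3$ is not proven: you reduce it to the vanishing of a Walsh coefficient of $\Phi_{2^{c-1}}\Phi_{2^{d-1}}$ and then explicitly defer the verification to the case analysis of \cite[Lemma~2.2]{DP14}. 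As written, assertion~2 is therefore not established independently of that reference. A workable way to close it within your framework is Fine's formula for the Walsh expansion of $\Phi_k$: its support is contained in $\{k\oplus 2^{a}: a\geq \mu_1(k)-1\}$, so by Parseval $\hat K(k,l)=\sum_{j}\hat\Phi_k(j)\hat\Phi_l(j)$ can be nonzero only if $k\oplus l=2^{a}\oplus 2^{a'}$ with $a\geq\mu_1(k)-1$ and $a'\geq\mu_1(l)-1$, i.e.\ $k$ and $l$ differ in at most two digits located at or immediately below their leading positions --- a condition incompatible with type $(p,q)$, $p+q\geq 3$. Either carry that out or do what the paper does and quote part~2 from \cite{DP14} outright.
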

\noindent Here we recall that the notion $T_{\geq \alpha}$ was introduced in Subsection~\ref{subsec:HO_def}. The first assertion of the lemma shows the decay of the Walsh coefficients, while the second assertion shows the sparsity of the Walsh coefficients.

\subsection{Optimal order $L_2$-discrepancy bounds}\label{subsec:optimal-L_2-bounds}
Before stating the result by Dick and Pillichshammer \cite{DP14} on the $L_2$-discrepancy bound for higher order digital nets, we start from another result by the same authors shown in \cite{DP05} by using Lemma~\ref{lem:digital-net-card}. 

\begin{theorem}\label{thm:shifted-L_2-disc}
Let $P$ be a digital $(t,m,s)$-net over $\FF_2$. Then there exists a constant $D^{(1)}_{s,t}$ such that the following holds:
\[ \int_{[0,1]^s}(L_2(P\oplus \bsdelta))^2\rd \bsdelta \leq D^{(1)}_{s,t}\frac{m^{s-1}}{2^{2m}}. \]
\end{theorem}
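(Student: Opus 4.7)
The plan is to chain together three results already available in the excerpt: the identification of the $L_2$-discrepancy with the worst-case error in $H^\downarrow$, the Walsh-coefficient bound in Lemma~\ref{lem:walsh_L2disc}, and the counting/summation bound for the dual net in Lemma~\ref{lem:digital-net-card}.

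First, using $L_2(P\oplus\bsdelta) = e^{\wor}(H^{\downarrow}, P\oplus\bsdelta)$ together with the second identity of Corollary~\ref{cor:worst-case-error_dual}, the key observation is that the random digital shift collapses the double Walsh sum into a single one along the diagonal:
\[
\int_{[0,1]^s}(L_2(P\oplus \bsdelta))^2\rd \bsdelta = \sum_{\bsk\in P^{\perp}\setminus \{\bszero\}}\widehat{K^{\downarrow}}(\bsk,\bsk).
\]
This is the main conceptual step: without averaging over shifts, one would have to handle off-diagonal coefficients and exploit sparsity via the $T_{\geq 3}$ condition, but here only the decay estimate is needed.

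Next I would apply Lemma~\ref{lem:walsh_L2disc}(1) with $\bsl=\bsk$, giving $|\widehat{K^{\downarrow}}(\bsk,\bsk)| \leq 3^{-s}\,2^{-2\mu_1(\bsk)}$. Substituting this yields
\[
\int_{[0,1]^s}(L_2(P\oplus \bsdelta))^2\rd \bsdelta \leq 3^{-s}\sum_{\bsk\in P^{\perp}\setminus\{\bszero\}} 2^{-2\mu_1(\bsk)}.
\]
Now Lemma~\ref{lem:digital-net-card}(2) with $b=2$ and $\lambda=2>1$ bounds the remaining sum by
\[
2^{s-1}\cdot 2^{2}\,\frac{(\mu_1(P^{\perp}))^{s-1}}{2^{2\mu_1(P^{\perp})}}\sum_{z=1}^{\infty} 2^{-z}z^{s-1},
\]
with the series in $z$ being a finite constant $C_s$ depending only on $s$.

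The last step is purely bookkeeping: for a digital $(t,m,s)$-net one has $\mu_1(P^{\perp}) = m-t+1$, so $2^{-2\mu_1(P^{\perp})} = 2^{2t-2}\cdot 2^{-2m}$ and $(\mu_1(P^{\perp}))^{s-1} = (m-t+1)^{s-1} \leq (m+1)^{s-1} \leq (2m)^{s-1}$ whenever $m\geq 1$. Collecting all $s$- and $t$-dependent factors into a single constant $D^{(1)}_{s,t}$ gives the claimed bound $D^{(1)}_{s,t}\,m^{s-1}/2^{2m}$. There is no real obstacle beyond keeping track of the constants; the substantive point is simply recognising that the shift-averaging turns the discrepancy estimate into a one-dimensional geometric sum over $\mu_1$-values, to which Lemma~\ref{lem:digital-net-card} applies verbatim.
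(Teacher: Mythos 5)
Your proof is correct and follows exactly the same route as the paper: the shift-averaged identity from Corollary~\ref{cor:worst-case-error_dual} collapses the double Walsh sum to the diagonal, Lemma~\ref{lem:walsh_L2disc}(1) supplies the decay bound $3^{-s}2^{-2\mu_1(\bsk)}$, Lemma~\ref{lem:digital-net-card}(2) with $\lambda=2$ handles the resulting sum, and $\mu_1(P^\perp)=m-t+1$ finishes the bookkeeping. No differences worth noting.
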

\begin{proof}
Using Lemma~\ref{lem:walsh_L2disc} and the second assertion of Lemma~\ref{lem:digital-net-card} (with $\lambda=2$) we have
\begin{align*}
\int_{[0,1]^s}(L_2(P\oplus \bsdelta))^2\rd \bsdelta & = \sum_{\bsk\in P^{\perp}\setminus \{\bszero\}}\widehat{K^{\downarrow}}(\bsk,\bsk) \leq \frac{1}{3^s}\sum_{\bsk\in P^{\perp}\setminus \{\bszero\}}2^{-2\mu_1(\bsk)} \\
& \leq \frac{2^{s+1}}{3^s}\cdot \frac{(\mu_1(P^\perp))^{s-1}}{2^{2\mu_1(P^\perp)}} \sum_{z=1}^\infty 2^{-z} z^{s-1} .
\end{align*}
Since we have $\mu_1(P^{\perp})=m-t+1$, the result of the theorem follows.
\end{proof}
\noindent This theorem directly implies the existence of a digital shift $\bsdelta\in [0,1]^s$ such that the digitally shifted digital $(t,m,s)$-net satisfies the best possible order of $L_2$-discrepancy:
\[ L_2(P\oplus \bsdelta) \leq \sqrt{D^{(1)}_{s,t}}\frac{m^{(s-1)/2}}{2^{m}}=\sqrt{D^{(1)}_{s,t}}\frac{(\log_2 N)^{(s-1)/2}}{N} . \]
However, this is a probabilistic result since we do not know how to find such $\bsdelta$ explicitly.

We note that the optimal exponent $(s-1)$ of the numerator in Theorem~\ref{thm:shifted-L_2-disc} comes from the inequality
\[ \left| \left\{ \bsk\in P^{\perp}\setminus \{\bszero\}\mid \mu_1(\bsk)=z \right\} \right| \leq b^{z-\mu_1(P^{\perp})+1}(z+1)^{\underline{s-1}}  \]
given in the first assertion of Lemma~\ref{lem:digital-net-card}. As is clear from the proof, since the expression of the shift-averaged squared $L_2$-discrepancy has only the diagonal terms of the Walsh coefficients, there is no necessity of exploiting the sparsity of the Walsh coefficients. In order to obtain a deterministic counterpart of Theorem~\ref{thm:shifted-L_2-disc}, however, it seems insufficient to exploit the decay of the Walsh coefficients only, and one approach is to exploit both the decay and the sparsity of the Walsh coefficients simultaneously. To do this, we rely on Lemma~\ref{lem:HO_digital-net-card2}. The following theorem is from \cite[Theorem~4.1]{DP14}.

\begin{theorem}\label{thm:L_2-disc}
Let $P$ be an order 3 digital $(t_3,m,s)$-net over $\FF_2$. Then there exists a constant $D_{s,t_3}^{(2)}$ such that the following holds:
\[  (L_2(P))^2\leq D_{s,t_3}^{(2)}\frac{m^{s-1}}{2^{2m}}. \]
\end{theorem}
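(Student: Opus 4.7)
The plan is to follow the strategy foreshadowed just before the statement: exploit both the decay and the sparsity of the Walsh coefficients of $K^{\downarrow}$ simultaneously, and combine these with the refined counting estimate of Lemma~\ref{lem:HO_digital-net-card2}. Starting from Corollary~\ref{cor:worst-case-error_dual}, I would write
\[ (L_2(P))^2 = \sum_{\bsk,\bsl\in P^{\perp}\setminus\{\bszero\}}\widehat{K^{\downarrow}}(\bsk,\bsl), \]
and then invoke both assertions of Lemma~\ref{lem:walsh_L2disc}: the sparsity result $\widehat{K^{\downarrow}}(\bsk,\bsl)=0$ on $T_{\geq 3}$ restricts the sum to pairs $(\bsk,\bsl)\notin T_{\geq 3}$, while the decay bound $|\widehat{K^{\downarrow}}(\bsk,\bsl)|\leq 3^{-s}\,2^{-\mu_1(\bsk)-\mu_1(\bsl)}$ turns the estimate into
\[ (L_2(P))^2 \leq \frac{1}{3^s}\sum_{z=2\mu_1(P^{\perp})}^{\infty} 2^{-z}\, A_z, \]
where $A_z$ denotes the number of pairs $(\bsk,\bsl)\in (P^{\perp}\setminus\{\bszero\})^2$ with $\mu_1(\bsk)+\mu_1(\bsl)=z$ and $(\bsk,\bsl)\notin T_{\geq 3}$.

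The next step is to bound $A_z$. Since $P$ is an order $3$ digital net, Lemma~\ref{lem:HO_digital-net-card2} with $\alpha=3$ and $b=2$ applies and yields $A_z\leq B_{3,2,s,t_3}\,(z-2\mu_1(P^{\perp}))^{2s+1}\,z^{s-1}\,2^{(z-2\mu_1(P^{\perp}))/2}$ for $z\geq 2\mu_1(P^{\perp})$. The critical feature is that the exponential growth factor in the count is only $2^{z/2}$ rather than $2^z$, so that the product $2^{-z}\cdot 2^{(z-2\mu_1(P^{\perp}))/2}=2^{-z/2-\mu_1(P^{\perp})}$ still decays geometrically in $z$. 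Changing variables to $y=z-2\mu_1(P^{\perp})$, the right-hand side reduces to a constant multiple of
\[ 2^{-2\mu_1(P^{\perp})}\sum_{y=0}^{\infty} 2^{-y/2}\, y^{2s+1}\bigl(y+2\mu_1(P^{\perp})\bigr)^{s-1}. \]

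To finish, I would split the polynomial factor using $(y+2\mu_1(P^{\perp}))^{s-1}\leq 2^{s-1}\bigl(y^{s-1}+(2\mu_1(P^{\perp}))^{s-1}\bigr)$; since $\sum_{y\geq 0}2^{-y/2}y^{N}$ converges for every fixed $N\in\NN_0$, the sum is bounded by a constant depending only on $s$ and $t_3$ times $(\mu_1(P^{\perp}))^{s-1}$. This gives $(L_2(P))^2\leq C_{s,t_3}\,(\mu_1(P^{\perp}))^{s-1}\,2^{-2\mu_1(P^{\perp})}$. The propagation rule (Lemma~\ref{lem:propagation}) then guarantees $\mu_1(P^{\perp})\geq m-\lceil t_3/3\rceil+1$, while $\mu_1(P^{\perp})\leq m+1$ holds trivially for any digital net with $2^m$ points, so the claimed bound of order $m^{s-1}/2^{2m}$ follows after absorbing all $s$- and $t_3$-dependent factors into $D^{(2)}_{s,t_3}$.

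The main obstacle is conceptual rather than technical: the argument relies \emph{essentially} on using both assertions of Lemma~\ref{lem:walsh_L2disc} at once. Without the sparsity restriction $(\bsk,\bsl)\notin T_{\geq 3}$, the only available counting bound would feature the exponential growth $2^z$ rather than $2^{z/2}$, and the geometric sum in $z$ would diverge. This is precisely why the order $3$ assumption cannot be weakened and why this deterministic $L_2$-bound represents a genuine improvement over the shift-averaged estimate of Theorem~\ref{thm:shifted-L_2-disc}, whose proof only needed the diagonal Walsh coefficients and hence only the decay.
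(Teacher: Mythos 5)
Your proposal is correct and follows essentially the same route as the paper's proof: restrict the double sum via the sparsity assertion of Lemma~\ref{lem:walsh_L2disc}, apply the decay bound, count with Lemma~\ref{lem:HO_digital-net-card2} for $\alpha=3$, shift the summation index by $2\mu_1(P^{\perp})$, and finish with the propagation rule. The only cosmetic difference is how you extract the factor $(\mu_1(P^{\perp}))^{s-1}$ from the convergent geometric-polynomial sum (an additive split versus the paper's multiplicative bound), which does not affect the result.
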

\begin{proof}
Using Lemmas~\ref{lem:walsh_L2disc} and \ref{lem:HO_digital-net-card2} (with $\alpha=3$) and then applying the change of variables $z\to z+2\mu_1(P^{\perp})$, we have
\begin{align*}
&\left(L_2(P)\right)^2 \\
& = \sum_{\bsk,\bsl\in P^{\perp}\setminus\{\bszero\}}\widehat{K^{\downarrow}}(\bsk,\bsl) \leq \frac{1}{3^s} \sum_{\substack{\bsk,\bsl\in P^{\perp}\setminus \{\bszero\}\\ (\bsk,\bsl)\notin T_{\geq 3}}}2^{-\mu_{1}(\bsk)-\mu_{1}(\bsl)} \\
& = \frac{1}{3^s} \sum_{z=2\mu_{1}(P^{\perp})}^{\infty}2^{-z} \left| \{ (\bsk,\bsl)\in (P^{\perp}\setminus \{\bszero\})^2\mid \mu_1(\bsk)+\mu_1(\bsl)=z, (\bsk,\bsl)\notin T_{\geq 3} \}\right| \\
& \leq \frac{B_{3,2,s,t_{3}}}{3^s}\sum_{z=2\mu_{1}(P^{\perp})}^{\infty}2^{-z} (z-2\mu_1(P^{\perp}))^{2s+1}z^{s-1}2^{(z-2\mu_1(P^{\perp}))/2} \\
& = \frac{B_{3,2,s,t_{3}}}{3^s}2^{-2\mu_1(P^{\perp})}\sum_{z=0}^{\infty}2^{-z/2} z^{2s+1}(z+2\mu_1(P^{\perp}))^{s-1}\\
& \leq \frac{B_{3,2,s,t_{3}}}{3^s}\cdot \frac{2^{2(s-1)}(\mu_1(P^{\perp}))^{s-1}}{2^{2\mu_1(P^{\perp})}}\sum_{z=0}^{\infty}2^{-z/2} z^{3s}.
\end{align*}
The last sum over $z$ is trivially finite. It follows from Lemma~\ref{lem:propagation} that an order 3 digital $(t_3,m,s)$-net over $\FF_2$ is also a digital $(t,m,s)$-net over $\FF_2$ with $t\leq \lceil t_3/3 \rceil$, i.e., $\mu_1(P^{\perp})\geq m-\lceil t_3/3 \rceil+1$. Thus, the result of the theorem follows.
\end{proof}

\begin{remark}
If we do not take the sparsity of the Walsh coefficients into account, we might proceed like
\begin{align*}
\left(L_2(P)\right)^2 & = \sum_{\bsk,\bsl\in P^{\perp}\setminus\{\bszero\}}\widehat{K^{\downarrow}}(\bsk,\bsl) \leq \frac{1}{3^s} \sum_{\bsk,\bsl\in P^{\perp}\setminus \{\bszero\}}2^{-\mu_{1}(\bsk)-\mu_{1}(\bsl)} \\
& = \frac{1}{3^s} \left( \sum_{\bsk\in P^{\perp}\setminus \{\bszero\}}2^{-\mu_{1}(\bsk)}\right)^2 \\
& = \frac{1}{3^s} \left( \sum_{z=\mu_1(P^{\perp})}^{\infty}2^{-z}\left| \left\{ \bsk\in P^{\perp}\setminus \{\bszero\}\mid \mu_1(\bsk)=z \right\} \right|\right)^2 \\
& \leq \frac{2^2}{3^s\cdot 2^{\mu_1(P^{\perp})}} \left( \sum_{z=\mu_1(P^{\perp})}^{\infty}(z+1)^{s-1}\right)^2,
\end{align*}
where we used Lemma~\ref{lem:digital-net-card} in the last inequality. Since the last sum over $z$ obviously diverges, this argument ends up with a trivial upper bound.
\end{remark}

We recall that Theorem~\ref{thm:L_2-disc} is for order 3 digital nets over $\FF_2$, a class of finite point sets. By considering order 5 digital sequences, as the other main result of the paper \cite{DP14}, Dick and Pillichshammer proved the optimal order $L_2$-discrepancy bound which holds uniformly for all $N$.

\section{Numerical integration}\label{sec:integration}

\subsection{Sobolev spaces}
Let us move onto multivariate numerical integration problem. The function space of our interest in this section is defined as follows. First let us consider the one-dimensional case. For $\alpha\in \NN$, the Sobolev space with smoothness $\alpha$ is
\begin{align*}
 H_{\alpha} & := \Big\{f \colon [0,1]\to \RR \mid \\
 & \qquad f^{(r)} \colon \text{absolutely continuous for $r=0,\ldots,\alpha-1$}, f^{(\alpha)}\in L^2([0,1])\Big\},
\end{align*}
where $f^{(r)}$ denotes the $r$-th derivative of $f$. According to \cite[Chapter~10.2]{Wbook}, the space $H_{\alpha}$ is a reproducing kernel Hilbert space with the reproducing kernel
\begin{align*}
 K_{\alpha}(x,y) = \sum_{r=0}^{\alpha}\frac{B_r(x)B_r(y)}{(r!)^2}+(-1)^{\alpha+1}\frac{B_{2\alpha}(|x-y|)}{(2\alpha)!} ,
\end{align*}
for $x,y\in [0,1]$, where $B_r$ denotes the Bernoulli polynomial of degree $r$, and with the inner product \begin{align*}
 \langle f, g \rangle_{K_\alpha} = \sum_{r=0}^{\alpha-1}\int_{0}^{1}f^{(r)}(x)\, \rd x \int_{0}^{1}g^{(r)}(x)\, \rd x + \int_{0}^{1}f^{(\alpha)}(x)g^{(\alpha)}(x)\, \rd x,
\end{align*}
for $f,g\in H_{\alpha}$.

For the $s$-dimensional case, we consider the $s$-fold tensor product space of the one-dimensional space introduced above. Thus the Sobolev space $H_{\alpha,s}$ which we consider is simply given by $H_{\alpha,s}=\bigotimes_{j=1}^{s} H_{\alpha}$. Again it is known from \cite[Section~8]{Aro50} that the reproducing kernel of the space $H_{\alpha,s}$ is the product of the reproducing kernels for the one-dimensional space $H_{\alpha}$.
Therefore, $H_{\alpha,s}$ is the reproducing kernel Hilbert space with the reproducing kernel
\begin{align*}
 K_{\alpha,s}(\bsx,\bsy) = \prod_{j=1}^{s}K_{\alpha}(x_j,y_j) ,
\end{align*}
and with the inner product
\begin{align*}
 \langle f, g \rangle_{K_{\alpha,s}} & = \sum_{u\subseteq \{1,\ldots,s\}}\sum_{\bsr_u\in \{0,\ldots,\alpha-1\}^{|u|}} \int_{[0,1]^{s-|u|}} \\
 & \qquad \left(\int_{[0,1]^{|u|}}f^{(\bsr_u,\bsalpha)}(\bsx)\, \rd \bsx_u\right) \left(\int_{[0,1]^{|u|}} g^{(\bsr_u,\bsalpha)}(\bsx) \, \rd \bsx_u\right) \, \rd \bsx_{\{1,\ldots,s\}\setminus u} ,
\end{align*}
for $f,g\in H_{\alpha,s}$. In the above, we used the following notation: For $u\subseteq \{1,\ldots,s\}$ and $\bsx\in [0,1]^s$, we write $\bsx_u=(x_j)_{j\in u}$. Moreover, for $\bsr_u=(r_j)_{j\in u}\in \{0,\ldots,\alpha-1\}^{|u|}$, $(\bsr_u,\bsalpha)$ denotes the $s$-dimensional vector whose $j$-th component equals $r_j$ if $j\in u$, and $\alpha$ otherwise. Note that an integral and sum over the empty set is defined to be the identity operator. Since the dimension $s$ is fixed, we shall simply write $H_{\alpha}$ and $K_{\alpha}$ instead of $H_{\alpha,s}$ and $K_{\alpha,s}$, respectively. 

If $P$ is a digital net over $\FF_b$, Corollary~\ref{cor:worst-case-error_dual} gives
\[ (e^{\wor}(H_{\alpha}, P))^2 = \sum_{\bsk,\bsl \in P^{\perp}\setminus \{\bszero\}}\widehat{K_\alpha}(\bsk,\bsl), \]
and
\[ \int_{[0,1]^s}(e^{\wor}(H_{\alpha}, P\oplus \bsdelta))^2\rd \bsdelta = \sum_{\bsk\in P^{\perp}\setminus \{\bszero\}}\widehat{K_\alpha}(\bsk,\bsk). \]
Similarly to Lemma~\ref{lem:walsh_L2disc}, the following result is known.
\begin{lemma}\label{lem:walsh_sobolev}
For $\bsk,\bsl\in \NN_0^s$, we have 
\begin{enumerate}
\item $|\widehat{K_{\alpha}}(\bsk,\bsl)| \leq C_{\alpha,b}^s b^{-\mu_\alpha(\bsk)-\mu_\alpha(\bsl)}$ with 
\[ C_{\alpha,b} =\max_{1\leq \nu\leq \alpha} \left\{ \sum_{\tau=\nu}^{\alpha}\frac{(C_{\tau,b})^2}{b^{2(\tau-\nu)}}+\frac{2C_{2\alpha,b}}{b^{2(\alpha-\nu)}}\right\}, \]
where
\[ C_{1,b}=\frac{1}{2\sin(\pi/b)}\quad \text{and} \quad C_{\tau,b}=\frac{(1+1/b+1/(b(b+1)))^{\tau-2}}{(2\sin(\pi/b))^{\tau}}\quad \text{for $\tau\geq 2$.} \]
\item $\widehat{K_{\alpha}}(\bsk,\bsl)=0$ if $(\bsk,\bsl)\in T_{\geq 2\alpha+1}$.
\end{enumerate}
\end{lemma}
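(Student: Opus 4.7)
The strategy is to exploit the tensor product structure of the kernel to reduce everything to the one-dimensional case. Since $K_{\alpha,s}(\bsx,\bsy) = \prod_{j=1}^{s} K_{\alpha}(x_j,y_j)$, the Walsh coefficient factorizes as $\widehat{K_{\alpha,s}}(\bsk,\bsl) = \prod_{j=1}^{s} \widehat{K_{\alpha}}(k_j,l_j)$ (this is immediate from Fubini and the product form of $\wal_{\bsk}$). For the first assertion, $\mu_\alpha(\bsk) = \sum_j \mu_\alpha(k_j)$ converts a product of one-dimensional bounds with constant $C_{\alpha,b}$ into the stated bound with $C_{\alpha,b}^s$. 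For the second assertion, the definition of $T_{\geq 2\alpha+1}$ for vectors requires at least one index $j$ with $(k_j,l_j) \in T_{\geq 2\alpha+1}$, and the vanishing of a single factor kills the product. So everything reduces to the one-dimensional claims on $\widehat{K_{\alpha}}(k,l)$.

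For the one-dimensional analysis, I would split the kernel additively as
\[
K_{\alpha}(x,y) = \sum_{r=0}^{\alpha} \frac{B_r(x) B_r(y)}{(r!)^2} + (-1)^{\alpha+1} \frac{B_{2\alpha}(|x-y|)}{(2\alpha)!},
\]
and treat the separable and diagonal pieces separately. The separable sum has Walsh coefficient $\sum_{r=0}^{\alpha} \overline{\widehat{B_r}(k)}\,\overline{\widehat{B_r}(l)}/(r!)^2$, which reduces to classical Walsh estimates for Bernoulli polynomials due to Dick: one has sharp bounds of the form $|\widehat{B_r}(k)| \leq C_{r,b}\, b^{-\mu_{r}(k)}$ (with $\widehat{B_r}(0) = 0$ for $r \geq 1$), and summing over $r$ leads to the constant $C_{\alpha,b}$ recorded in the lemma statement. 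Moreover, $\widehat{B_r}(k)$ is supported only on those $k$ whose $b$-adic expansion has a controlled number of nonzero digits, which directly gives sparsity of each separable term inside the complement of $T_{\geq 2\alpha+1}$.

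The technical heart is the analysis of the diagonal term $B_{2\alpha}(|x-y|)/(2\alpha)!$, because $|x-y|$ is not digitally natural. The plan is to split the integral into $\{x \geq y\}$ and $\{x < y\}$, apply the reflection identity $B_{2\alpha}(1-t) = B_{2\alpha}(t)$ (valid for $2\alpha$ even), substitute $z = x-y$ (respectively $y-x$), and thereby express $\widehat{B_{2\alpha}(|x-y|)}(k,l)$ as a bilinear combination of the one-dimensional Bernoulli Walsh coefficients $\widehat{B_{2\alpha}}$ against a "digit-matching" kernel between $k$ and $l$. The decay part then follows by invoking the bound $|\widehat{B_{2\alpha}}(m)| \leq C_{2\alpha,b}\, b^{-\mu_{2\alpha}(m)}$ and redistributing the weight between $k$ and $l$ so that each receives $\mu_\alpha$; the precise arithmetic is what forces the recursive definition of $C_{\alpha,b}$ in the lemma statement.

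The main obstacle will be the sparsity claim for the diagonal piece. Establishing that $\widehat{K_\alpha}(k,l) = 0$ whenever $(k,l) \in T_{\geq 2\alpha+1}$ requires a careful digit-by-digit accounting: one shows that in the representation of $\widehat{B_{2\alpha}(|x-y|)}(k,l)$ obtained above, only combinations $(k,l)$ whose combined type index $p+q$ is at most $2\alpha$ produce a nonvanishing integral, because each non-matching digit consumes at most one "smoothness slot" and a kernel of smoothness $\alpha$ in each variable offers only $2\alpha$ slots in total. This combinatorial bookkeeping, which is the one-dimensional analogue of \cite[Lemma~2.2]{DP14} generalized from $\alpha=2$ to arbitrary $\alpha$, is where most of the work in \cite{GSY3} goes; the rest of the proof is essentially assembling the constants and invoking the tensor-product reduction above.
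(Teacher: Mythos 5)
First, a framing point: the paper does not actually prove this lemma --- it is stated with citations to \cite{BD09} for the decay and to \cite{GSY3} for the sparsity --- so the comparison is against those proofs rather than against anything in the text. Your tensor-product reduction and your handling of the separable part $\sum_{r}B_r(x)B_r(y)/(r!)^2$ are sound: the factorization $\widehat{K_{\alpha}}(\bsk,\bsl)=\prod_{j}\widehat{K_{\alpha}}(k_j,l_j)$ is correct, the vanishing of $\widehat{B_r}(k)$ when $k$ has more than $r$ nonzero digits does dispose of the separable terms whenever $(k,l)$ has type $(p,q)$ with $p+q\geq 2\alpha+1$ (since then one of $k,l$ has more than $\alpha$ nonzero digits), and the shape of the constant $C_{\alpha,b}$ is consistent with this accounting.

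The gap is in the diagonal term $B_{2\alpha}(|x-y|)/(2\alpha)!$, which is where all of the substance of both assertions lives. The substitution $z=x-y$ after splitting into $\{x\geq y\}$ and $\{x<y\}$ does not produce a ``bilinear combination of one-dimensional Bernoulli Walsh coefficients'': $\overline{\wal_k(x)}\,\wal_l(y)$ is a character of the $b$-adic group $([0,1],\oplus)$, not of $(\RR,+)$, so it does not separate in the variables $(z,y)$, and the two-dimensional Walsh coefficient of a function of $x-y$ cannot be reduced to one-dimensional ones by this change of variables. The arguments in \cite{BD09} and \cite{GSY3} avoid this: one either rewrites the diagonal term in the genuinely separable Peano form $\int_0^1 (x-t)_+^{\alpha-1}(y-t)_+^{\alpha-1}/((\alpha-1)!)^2\,\rd t$, so that the coefficient becomes $\int_0^1\widehat{g_t}(k)\overline{\widehat{g_t}(l)}\,\rd t$ plus separable corrections, or one invokes the explicit iterated-integration-by-parts formulas for Walsh coefficients of smooth functions as in \cite{SY16}. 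The type-$(p,q)$ vanishing is then an exact orthogonality statement in the $t$-variable --- $\widehat{g_t}(k)$, viewed as a function of $t$, is supported on Walsh indices obtained from $k$ by deleting at most $\alpha$ leading nonzero digits, and $p+q\geq 2\alpha+1$ forces $\max(p,q)>\alpha$, so no such indices can coincide --- not a digit-by-digit ``slot-counting'' inequality. As written, your treatment of assertion 2 for the diagonal term is a heuristic that you yourself defer to \cite{GSY3}, and the mechanism you propose for assertion 1 on that term would not survive being written out.
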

\noindent The first assertion on the decay of the Walsh coefficients was shown in \cite{BD09}, while the second assertion of the sparsity of the Walsh coefficients was shown in \cite{GSY3}. Regarding the first assertion, we also refer to more recent works \cite{SY16,Y17} which introduce different approaches from the one by Dick \cite{Dic07,Dic08,Dic09} for evaluating the Walsh coefficients. In many cases, one may obtain smaller constants $C_{\alpha,b}$.

\begin{remark}
A lower bound on the worst-case error of order $(\log N)^{(s-1)/2}/N^{\alpha}$, which holds for any (non-linear/adaptive) quadrature rule based on $N$ function evaluations, can be proven by adapting the bump function technique from \cite{Bak59}. We also refer to \cite[Theorem~4 and Appendix]{DNP14} whose result directly applies to the present problem. As we shall show, higher order digital nets and sequences achieve this best possible order exactly.
\end{remark}

\subsection{Optimal order error bounds}

First we discuss what happens if we exploit only the decay of the Walsh coefficients. A similar result has been already proven in \cite[Theorem~30]{BD09} but with the slightly worse exponent of the $\log N$ term.
\begin{theorem}\label{thm:sub-optimal_error}
Let $\alpha\geq 2$. Let $P$ be an order $\alpha$ digital $(t_{\alpha},m,s)$-net over $\FF_b$. Then there exists a constant $E_{\alpha,b,s,t_{\alpha}}^{(1)}>0$ such that the following holds:
\[ \int_{[0,1]^s}(e^{\wor}(H_{\alpha}, P\oplus \bsdelta))^2\rd \bsdelta \leq E_{\alpha,b,s,t_{\alpha}}^{(1)}\frac{m^{s\alpha -1}}{b^{2\alpha m}}. \]
\end{theorem}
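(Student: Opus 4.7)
The plan is to mimic the proof of Theorem~\ref{thm:shifted-L_2-disc} in the higher order setting, exploiting only the decay (not the sparsity) of the Walsh coefficients, since the shift-average collapses the error expression to the single diagonal sum and no off-diagonal terms need to be controlled.

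First I would invoke Corollary~\ref{cor:worst-case-error_dual}, which applies once we check that $K_{\alpha}$ is continuous, that $\int_{[0,1]^s}\sqrt{K_\alpha(\bsx,\bsx)}\rd\bsx<\infty$ and that the Walsh coefficients are absolutely summable (the decay bound in Lemma~\ref{lem:walsh_sobolev} combined with summability arguments as used below will yield this). This gives
\[ \int_{[0,1]^s}(e^{\wor}(H_\alpha,P\oplus \bsdelta))^2 \rd \bsdelta = \sum_{\bsk \in P^\perp \setminus \{\bszero\}} \widehat{K_\alpha}(\bsk,\bsk). \]

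Next I would apply the first assertion of Lemma~\ref{lem:walsh_sobolev} on the diagonal, giving $|\widehat{K_\alpha}(\bsk,\bsk)|\leq C_{\alpha,b}^s\, b^{-2\mu_\alpha(\bsk)}$. Since $\alpha\geq 2$, we have $2>1/\alpha$, so the second assertion of Lemma~\ref{lem:HO_digital-net-card} applies with $\lambda=2$, yielding
\[ \sum_{\bsk\in P^\perp \setminus \{\bszero\}} b^{-2\mu_\alpha(\bsk)} \leq 2^{s\alpha-1}(b-1)^{s\alpha}\frac{(\mu_\alpha(P^\perp))^{s\alpha-1}}{b^{2\mu_\alpha(P^\perp)}} \sum_{z=0}^\infty b^{(1/\alpha-2)z}(z+2)^{s\alpha-1}, \]
where the sum over $z$ is finite and depends only on $\alpha,b,s$.

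Finally, using $\mu_\alpha(P^\perp)=\alpha m - t_\alpha+1$, the factor $b^{-2\mu_\alpha(P^\perp)}$ contributes $b^{2(t_\alpha-1)}b^{-2\alpha m}$ and $(\mu_\alpha(P^\perp))^{s\alpha-1}\leq (\alpha m)^{s\alpha-1}\leq \alpha^{s\alpha-1}m^{s\alpha-1}$ for $m$ sufficiently large (the small-$m$ regime can be absorbed into the constant). Collecting constants into a single $E^{(1)}_{\alpha,b,s,t_\alpha}$ depending only on the listed parameters gives the desired bound. There is essentially no main obstacle here: because the shift-average has already killed the off-diagonal Walsh coefficients, we do not need the more delicate counting of Lemma~\ref{lem:HO_digital-net-card2}, and the proof reduces to a direct combination of Corollary~\ref{cor:worst-case-error_dual}, Lemma~\ref{lem:walsh_sobolev} and the standard geometric-series estimate of Lemma~\ref{lem:HO_digital-net-card}.
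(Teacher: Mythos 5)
Your proposal is correct and follows essentially the same route as the paper's own proof: Corollary~\ref{cor:worst-case-error_dual} for the diagonal-sum expression, the decay bound of Lemma~\ref{lem:walsh_sobolev}, the second assertion of Lemma~\ref{lem:HO_digital-net-card} with $\lambda=2$, and finally $\mu_\alpha(P^{\perp})=\alpha m-t_\alpha+1$. The only difference is that you spell out the verification of the hypotheses of Corollary~\ref{cor:worst-case-error_dual} and the final constant-collection, which the paper leaves implicit.
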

\begin{proof}
Using the first assertion of Lemma~\ref{lem:walsh_sobolev} and the second assertion of Lemma~\ref{lem:HO_digital-net-card} (with $\lambda=2$), we have
\begin{align*}
& \int_{[0,1]^s}(e^{\wor}(H_{\alpha}, P\oplus \bsdelta))^2\rd \bsdelta \\
& = \sum_{\bsk\in P^{\perp}\setminus \{\bszero\}}\widehat{K_\alpha}(\bsk,\bsk) \leq C_{\alpha,b}^s\sum_{\bsk\in P^{\perp}\setminus \{\bszero\}}b^{-2\mu_\alpha(\bsk)} \\
& \leq C_{\alpha,b}^s2^{s\alpha-1}(b-1)^{s\alpha }\frac{(\mu_\alpha(P^{\perp}))^{s\alpha -1}}{b^{2 \mu_\alpha(P^{\perp})}}\sum_{z=0}^{\infty}b^{(1/\alpha-2) z}(z+2)^{s\alpha -1}.
\end{align*}
By considering the equality $\mu_{\alpha}(P^{\perp}) = \alpha m-t_{\alpha}+1$, we prove the theorem.
\end{proof}
\noindent
Again the exponent $(s\alpha-1)$ of the numerator of the theorem stems from the inequality
\[ \left| \left\{ \bsk\in P^{\perp}\setminus \{\bszero\}\mid \mu_\alpha(\bsk)=z \right\} \right| \leq  (b-1)^{s\alpha }b^{(z-\mu_\alpha(P^{\perp}))/\alpha}(z+2)^{\underline{s\alpha -1}} \]
given in the first assertion of Lemma~\ref{lem:HO_digital-net-card}. It seems hard to fundamentally improve this bound. Thus, even before exploiting the sparsity of the Walsh coefficients, there is a difficulty in obtaining the best possible order of the shift-averaged worst-case error. 

To overcome this issue, let us go back to the bound given in the first assertion of Lemma~\ref{lem:digital-net-card}:
\[ \left| \left\{ \bsk\in P^{\perp}\setminus \{\bszero\}\mid \mu_1(\bsk)=z \right\} \right| \leq b^{z-\mu_1(P^{\perp})+1}(z+1)^{\underline{s-1}} . \]
As we discussed in Subsection~\ref{subsec:optimal-L_2-bounds}, this gives the optimal order of the shift-averaged $L_2$-discrepancy. Considering that the best possible exponent of the $\log N$ term for the present integration problem is the same as that of the $L_2$-discrepancy, which is $(s-1)/2$, one idea is to switch the weight function from $\mu_{\alpha}$ to $\mu_1$ in the error analysis. The following interpolation inequality was shown in \cite{GSY1} to realize this:
\begin{lemma}\label{lem:weight_jensen}
Let $\alpha,\beta\in \NN$ with $1<\alpha<\beta$. For any $\bsk\in \NN_0^s$ we have
\[ \mu_{\alpha}(\bsk) \geq \frac{\alpha-1}{\beta-1}\mu_{\beta}(\bsk) + \frac{\beta-\alpha}{\beta-1}\mu_1(\bsk). \]
\end{lemma}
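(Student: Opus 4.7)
My plan is to reduce the inequality to its one-dimensional version and then recognize that version as a straightforward concavity statement. Since every weight function in the claim is additive in the coordinates, $\mu_\gamma(\bsk) = \sum_{j=1}^s \mu_\gamma(k_j)$ for $\gamma \in \{1,\alpha,\beta\}$, summing a coordinate-wise bound immediately gives the vector bound. Thus it suffices to prove, for every $k \in \NN_0$, the one-dimensional estimate
\[ \mu_\alpha(k) \geq \frac{\alpha-1}{\beta-1}\mu_\beta(k) + \frac{\beta-\alpha}{\beta-1}\mu_1(k). \]
If $k=0$ all three weights vanish, so I may assume $k \geq 1$ and fix the $b$-adic expansion $k = \kappa_1 b^{c_1-1} + \cdots + \kappa_v b^{c_v-1}$ with $c_1 > c_2 > \cdots > c_v > 0$, as in the definition of the Dick weight.

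Next I would introduce the auxiliary function $f\colon \NN \to \NN_0$ defined by $f(n) := \sum_{i=1}^{\min(n,v)} c_i$, so that $f(1) = \mu_1(k)$, $f(\alpha) = \mu_\alpha(k)$, and $f(\beta) = \mu_\beta(k)$. A direct identity yields $\alpha = \frac{\alpha-1}{\beta-1}\beta + \frac{\beta-\alpha}{\beta-1}\cdot 1$, expressing $\alpha$ as a convex combination of $1$ and $\beta$ with exactly the desired weights. Consequently the one-dimensional inequality is equivalent to the concavity statement
\[ f(\alpha) \geq \frac{\alpha-1}{\beta-1}f(\beta) + \frac{\beta-\alpha}{\beta-1}f(1). \]

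To close the argument I would verify that $f$ is concave on $\NN$ by checking that its first differences are non-increasing. For $1 \leq n < v$ one has $f(n+1) - f(n) = c_{n+1}$, while for $n \geq v$ one has $f(n+1) - f(n) = 0$. Since the definition of the Dick weight forces $c_1 > c_2 > \cdots > c_v > 0$, the sequence of successive differences $c_2, c_3, \ldots, c_v, 0, 0, \ldots$ is strictly decreasing on $\{1,\ldots,v-1\}$ and then drops to $0$, so it is non-increasing throughout. This concavity immediately yields the claimed inequality at the three integer points $1, \alpha, \beta$, completing the proof. I do not foresee any essential obstacle: the whole argument is elementary once the reformulation in terms of $f$ is spotted, and the key observation is simply that $\mu_\alpha(k)$, as a function of $\alpha$ for fixed $k$, is the partial sum of a non-negative, eventually-vanishing, strictly decreasing sequence, and therefore concave in $\alpha$.
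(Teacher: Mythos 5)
Your proof is correct. The paper itself states Lemma~\ref{lem:weight_jensen} without proof, deferring to \cite{GSY1}; the argument there rests on exactly the observation you make — that for fixed $k$ the map $n\mapsto\mu_n(k)$ has non-increasing increments $c_2>c_3>\cdots>c_v>0=0=\cdots$, so the averaged increment $(\mu_n(k)-\mu_1(k))/(n-1)$ is non-increasing in $n$ — and your discrete-concavity packaging, combined with the convex-combination identity $\alpha=\frac{\alpha-1}{\beta-1}\beta+\frac{\beta-\alpha}{\beta-1}$ and the coordinate-wise additivity of the weights, is a clean and complete rendering of the same approach.
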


By using this inequality together with the propagation rule of higher order digital nets (Lemma~\ref{lem:propagation}), we can improve the order of the shift-averaged worst-case error to best possible. The following result is from \cite{GSY1}.

\begin{theorem}\label{thm:optimal_error_gsy1}
Let $\alpha\geq 2$. Let $P$ be an order $2 \alpha$ digital $(t_{2\alpha},m,s)$-net over $\FF_b$. Then there exists a constant $E_{\alpha,b,s,t_{2\alpha}}^{(2)}>0$ such that the following holds:
\[ \int_{[0,1]^s}(e^{\wor}(H_{\alpha}, P\oplus \bsdelta))^2\rd \bsdelta \leq E_{\alpha,b,s,t_{2\alpha}}^{(2)}\frac{m^{s -1}}{b^{2\alpha m}}. \]
\end{theorem}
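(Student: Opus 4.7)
The plan is to begin with the shift-averaged expression from Corollary~\ref{cor:worst-case-error_dual},
\[ \int_{[0,1]^s}(e^{\wor}(H_{\alpha}, P\oplus \bsdelta))^2\rd \bsdelta = \sum_{\bsk\in P^{\perp}\setminus \{\bszero\}}\widehat{K_\alpha}(\bsk,\bsk), \]
and then to invoke the Walsh coefficient decay estimate of Lemma~\ref{lem:walsh_sobolev}(1), which reduces the task to bounding $\sum_{\bsk \in P^{\perp}\setminus \{\bszero\}} b^{-2\mu_{\alpha}(\bsk)}$ up to a constant $C_{\alpha,b}^s$. As Theorem~\ref{thm:sub-optimal_error} illustrates, a direct application of Lemma~\ref{lem:HO_digital-net-card}(2) to this sum produces a polynomial factor of $m^{s\alpha-1}$, which is larger than the target $m^{s-1}$ by a factor of $m^{s(\alpha-1)}$. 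To sharpen this, I would trade the $\mu_{\alpha}$-weight for the $\mu_{1}$-weight, since the latter enjoys the tighter counting bound of Lemma~\ref{lem:digital-net-card}.

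The key step is to apply Lemma~\ref{lem:weight_jensen} with $\beta = 2\alpha$, which gives
\[ 2\mu_{\alpha}(\bsk) \geq \frac{2(\alpha-1)}{2\alpha-1}\mu_{2\alpha}(\bsk) + \frac{2\alpha}{2\alpha-1}\mu_1(\bsk). \]
For every $\bsk \in P^{\perp}\setminus \{\bszero\}$ the order $2\alpha$ hypothesis yields the uniform lower bound $\mu_{2\alpha}(\bsk) \geq \mu_{2\alpha}(P^{\perp}) = 2\alpha m - t_{2\alpha} + 1$, so factoring out this uniform piece produces
\[ \sum_{\bsk \in P^{\perp}\setminus \{\bszero\}} b^{-2\mu_{\alpha}(\bsk)} \leq b^{-\frac{2(\alpha-1)}{2\alpha-1}(2\alpha m - t_{2\alpha} + 1)} \sum_{\bsk\in P^{\perp}\setminus \{\bszero\}} b^{-\lambda \mu_1(\bsk)}, \]
with $\lambda = \frac{2\alpha}{2\alpha-1} > 1$.

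To control the residual $\mu_1$-sum, the propagation rule Lemma~\ref{lem:propagation} identifies $P$ with a digital $(t, m, s)$-net satisfying $t \leq \lceil t_{2\alpha}/(2\alpha)\rceil$, so that $\mu_1(P^{\perp}) \geq m - \lceil t_{2\alpha}/(2\alpha)\rceil + 1$. Lemma~\ref{lem:digital-net-card}(2), applicable because $\lambda > 1$, bounds the inner sum by a constant depending only on $\alpha, b, s$ times $(\mu_1(P^{\perp}))^{s-1} b^{-\lambda \mu_1(P^{\perp})}$. The two exponents of $b$ assemble as
\[ \frac{2(\alpha-1)}{2\alpha-1} \cdot 2\alpha m + \frac{2\alpha}{2\alpha-1} \cdot m = \frac{2\alpha m \bigl(2(\alpha-1)+1\bigr)}{2\alpha-1} = 2\alpha m, \]
while the factor $(\mu_1(P^{\perp}))^{s-1}$ contributes the desired $m^{s-1}$ polynomial term; absorbing the remaining $t_{2\alpha}$-dependent and $s$-dependent constants into $E_{\alpha,b,s,t_{2\alpha}}^{(2)}$ yields the claimed bound.

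The main obstacle is picking the right convex combination in Lemma~\ref{lem:weight_jensen}: one simultaneously needs $\lambda > 1$ so that Lemma~\ref{lem:digital-net-card}(2) is applicable (otherwise the geometric series diverges), and the two contributions to the exponent of $b$ must combine into exactly $2\alpha m$ so as not to lose anything in the convergence rate. The value $\beta = 2\alpha$ is precisely the smallest integer that makes both demands compatible, which is the structural reason the theorem assumes an order $2\alpha$ (rather than merely order $\alpha$) digital net.
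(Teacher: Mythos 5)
Your proposal is correct and follows essentially the same route as the paper's proof: the same shift-averaged Walsh-coefficient expansion, Lemma~\ref{lem:weight_jensen} with $\beta=2\alpha$ (your coefficients $\tfrac{2(\alpha-1)}{2\alpha-1}$ and $\lambda=\tfrac{2\alpha}{2\alpha-1}$ are exactly the paper's $2A$ and $2B$), the uniform bound $\mu_{2\alpha}(\bsk)\geq\mu_{2\alpha}(P^{\perp})$, Lemma~\ref{lem:digital-net-card}(2), and the propagation rule to control $\mu_1(P^{\perp})$. Your closing observation about why $\beta=2\alpha$ is the minimal admissible choice is also accurate.
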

\begin{proof}
We write $A=(\alpha-1)/(2\alpha-1)$ and $B=\alpha/(2\alpha-1)$. Using the first assertion of Lemma~\ref{lem:walsh_sobolev}, Lemma~\ref{lem:weight_jensen} (with $\beta=2\alpha$) and the second assertion of Lemma~\ref{lem:digital-net-card} (with $\lambda=2B$), we have
\begin{align*}
\int_{[0,1]^s}(e^{\wor}(H_{\alpha}, P\oplus \bsdelta))^2\rd \bsdelta & = \sum_{\bsk\in P^{\perp}\setminus \{\bszero\}}\widehat{K_\alpha}(\bsk,\bsk) \leq C_{\alpha,b}^s\sum_{\bsk\in P^{\perp}\setminus \{\bszero\}}b^{-2\mu_\alpha(\bsk)} \\
& \leq C_{\alpha,b}^s\sum_{\bsk\in P^{\perp}\setminus \{\bszero\}}b^{-2A\mu_{2\alpha}(\bsk)-2B\mu_1(\bsk)} \\
& \leq C_{\alpha,b}^sb^{-2A\mu_{2\alpha}(P^{\perp})}\sum_{\bsk\in P^{\perp}\setminus \{\bszero\}}b^{-2B\mu_1(\bsk)} \\
& \leq C_{\alpha,b}^s2^{s-1}b^{2B}\frac{(\mu_1(P^\perp))^{s-1}}{b^{2A\mu_{2\alpha}(P^{\perp})+2B \mu_1(P^\perp)}} \sum_{z=1}^\infty b^{(1-2B)z} z^{s-1}.
\end{align*}
Since $2B-1=1/(2\alpha-1)>0$, the last sum over $z$ is finite. Using Lemma~\ref{lem:propagation} we have
\[ \mu_1(P^{\perp}) \geq m-\lceil t_{2\alpha}/(2\alpha)\rceil +1,  \]
and so 
\begin{align*}
2A\mu_{2\alpha}(P^{\perp})+2B\mu_1(P^{\perp}) & \geq 2A(2\alpha m -t_{2\alpha}+1)+2B(m-\lceil t_{2\alpha}/(2\alpha)\rceil +1) \\
& = 2\alpha m-2A(t_{2\alpha}-1)-2B(\lceil t_{2\alpha}/(2\alpha)\rceil -1),
\end{align*}
from which the result of the theorem follows.
\end{proof}
\noindent
It is important to recall that we consider order $2\alpha$ digital nets in this theorem, instead of order $\alpha$ digital nets as in Theorem~\ref{thm:sub-optimal_error}. The order $2\alpha$ is required here to ensure the finiteness of the sum 
\[ \sum_{z=1}^\infty b^{(1-2B)z} z^{s-1}. \]

Finally, combining the idea of switching the weight function (Theorem~\ref{thm:optimal_error_gsy1}) with the idea of exploiting both the decay and the sparsity of the Walsh coefficients simultaneously (Theorem~\ref{thm:L_2-disc}), we arrive at a deterministic counterpart of Theorem~\ref{thm:optimal_error_gsy1}, proven in \cite{GSY3}.
\begin{theorem}\label{thm:optimal_error_gsy3}
Let $\alpha\geq 2$. Let $P$ be an order $(2\alpha+1)$ digital $(t_{2\alpha+1},m,s)$-net over $\FF_b$. Then there exists a constant $E_{\alpha,b,s,t_{2\alpha+1}}^{(3)}>0$ such that the following holds:
\[ (e^{\wor}(H_{\alpha}, P))^2 \leq E_{\alpha,b,s,t_{2\alpha+1}}^{(3)}\frac{m^{s-1}}{b^{2\alpha m}}. \]
\end{theorem}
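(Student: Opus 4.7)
\textbf{Proof plan for Theorem~\ref{thm:optimal_error_gsy3}.}
The plan is to combine the two ingredients developed separately in the preceding theorems: exploiting both the decay and the sparsity of the Walsh coefficients of $K_\alpha$ (the strategy behind Theorem~\ref{thm:L_2-disc}), together with the interpolation inequality Lemma~\ref{lem:weight_jensen} that converts $\mu_\alpha$ to a convex combination of $\mu_1$ and $\mu_\beta$ (the strategy behind Theorem~\ref{thm:optimal_error_gsy1}). The value $\beta=2\alpha+1$ is dictated by the fact that the sparsity of $\widehat{K_\alpha}$ in Lemma~\ref{lem:walsh_sobolev} kicks in precisely at $T_{\geq 2\alpha+1}$, so this same parameter should be used both in the sparsity-aware counting lemma (Lemma~\ref{lem:HO_digital-net-card2}) and as $\beta$ in Lemma~\ref{lem:weight_jensen}. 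This explains why the theorem needs an order $(2\alpha+1)$ digital net.

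First, I would apply Corollary~\ref{cor:worst-case-error_dual} to write $(e^{\wor}(H_\alpha, P))^2$ as a double sum of $\widehat{K_\alpha}(\bsk,\bsl)$ over $\bsk,\bsl\in P^\perp\setminus\{\bszero\}$, then use the sparsity of Lemma~\ref{lem:walsh_sobolev} to restrict to $(\bsk,\bsl)\notin T_{\geq 2\alpha+1}$, and apply the decay bound. Next, invoking Lemma~\ref{lem:weight_jensen} with $\beta=2\alpha+1$ yields
\[ \mu_\alpha(\bsk)+\mu_\alpha(\bsl)\geq \tfrac{\alpha-1}{2\alpha}\bigl(\mu_{2\alpha+1}(\bsk)+\mu_{2\alpha+1}(\bsl)\bigr)+\tfrac{\alpha+1}{2\alpha}\bigl(\mu_1(\bsk)+\mu_1(\bsl)\bigr), \]
and bounding $\mu_{2\alpha+1}(\bsk)+\mu_{2\alpha+1}(\bsl)\geq 2\mu_{2\alpha+1}(P^\perp)$ factors out $b^{-\frac{\alpha-1}{\alpha}\mu_{2\alpha+1}(P^\perp)}$. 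The remaining double sum is grouped by $z=\mu_1(\bsk)+\mu_1(\bsl)$ and estimated by Lemma~\ref{lem:HO_digital-net-card2} (with its parameter set to $2\alpha+1$). The combined exponent $-\tfrac{\alpha+1}{2\alpha}z+\tfrac{1}{2}(z-2\mu_1(P^\perp))$ collapses to $-\tfrac{z}{2\alpha}-\mu_1(P^\perp)$, so after the substitution $w=z-2\mu_1(P^\perp)$ and the elementary bound $(w+2\mu_1(P^\perp))^{s-1}\leq 2^{s-1}(w^{s-1}+(2\mu_1(P^\perp))^{s-1})$, the residual sum over $w$ converges geometrically and is controlled by a constant times $(\mu_1(P^\perp))^{s-1}\leq m^{s-1}$.

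Finally, I would verify the exponent. Using $\mu_{2\alpha+1}(P^\perp)=(2\alpha+1)m-t_{2\alpha+1}+1$ together with Lemma~\ref{lem:propagation}, which gives $\mu_1(P^\perp)\geq m-\lceil t_{2\alpha+1}/(2\alpha+1)\rceil+1$, the total exponent of $b$ becomes
\[ -\tfrac{\alpha-1}{\alpha}\mu_{2\alpha+1}(P^\perp)-\tfrac{\alpha+1}{\alpha}\mu_1(P^\perp) = -\tfrac{(\alpha-1)(2\alpha+1)+(\alpha+1)}{\alpha}\,m+O(1) = -2\alpha m+O(1), \]
and the theorem follows. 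The main obstacle (and the source of elegance here) is precisely this arithmetic balance: the single choice $\beta=2\alpha+1$ must simultaneously match the sparsity level in Lemma~\ref{lem:walsh_sobolev}, yield the coefficient $\tfrac{\alpha+1}{2\alpha}>\tfrac{1}{2}$ on $\mu_1$ (ensuring the geometric sum in $z$ converges against the $b^{z/2}$ growth in Lemma~\ref{lem:HO_digital-net-card2}), and produce the numerator $(\alpha-1)(2\alpha+1)+(\alpha+1)=2\alpha^2$ that gives the optimal rate. Once these pieces line up, the remaining work is routine book-keeping of polynomial factors.
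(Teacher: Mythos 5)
Your proposal follows essentially the same route as the paper's proof: the same reduction via Corollary~\ref{cor:worst-case-error_dual}, the same use of the sparsity and decay bounds of Lemma~\ref{lem:walsh_sobolev}, Lemma~\ref{lem:weight_jensen} with $\beta=2\alpha+1$ giving the coefficients $\tfrac{\alpha-1}{2\alpha}$ and $\tfrac{\alpha+1}{2\alpha}$, the counting estimate of Lemma~\ref{lem:HO_digital-net-card2}, and the same final exponent arithmetic $(\alpha-1)(2\alpha+1)+(\alpha+1)=2\alpha^2$. All steps check out, and only routine bookkeeping of the polynomial factors (handled slightly differently but equivalently) separates your write-up from the one in the paper.
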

\begin{proof}
We write $A=(\alpha-1)/(2\alpha)$ and $B=(\alpha+1)/(2\alpha)$. Using Lemmas~\ref{lem:walsh_sobolev}, \ref{lem:weight_jensen} (with $\beta=2\alpha+1$) and \ref{lem:HO_digital-net-card2} (with $\alpha$ replaced by $2\alpha+1$) in order and then applying the change of variables $z\to z+2\mu_1(P^{\perp})$, we have
\begin{align*}
&(e^{\wor}(H_{\alpha}, P))^2 \\
& = \sum_{\bsk,\bsl \in P^{\perp}\setminus \{\bszero\}}\widehat{K_\alpha}(\bsk,\bsl) \leq C_{\alpha,b}^s \sum_{\substack{\bsk,\bsl\in P^{\perp}\setminus \{\bszero\}\\ (\bsk,\bsl)\notin T_{\geq 2\alpha+1}}}b^{-\mu_{\alpha}(\bsk)-\mu_{\alpha}(\bsl)} \\
& \leq C_{\alpha,b}^sb^{-2A\mu_{2\alpha+1}(P^{\perp})} \sum_{\substack{\bsk,\bsl\in P^{\perp}\setminus \{\bszero\}\\ (\bsk,\bsl)\notin T_{\geq 2\alpha+1}}}b^{-B(\mu_1(\bsk)+\mu_1(\bsl))} \\
& = C_{\alpha,b}^sb^{-2A\mu_{2\alpha+1}(P^{\perp})} \sum_{z=2\mu_1(P^{\perp})}^{\infty}b^{-Bz} \\
& \quad \times \left| \{ (\bsk,\bsl)\in (P^{\perp}\setminus \{\bszero\})^2\mid \mu_1(\bsk)+\mu_1(\bsl)=z, (\bsk,\bsl)\notin T_{\geq 2\alpha+1} \}\right| \\
& \leq B_{2\alpha+1,b,s,t_{2\alpha+1}}C_{\alpha,b}^sb^{-2A\mu_{2\alpha+1}(P^{\perp})} \sum_{z=2\mu_1(P^{\perp})}^{\infty}b^{-Bz}(z-2\mu_1(P^{\perp}))^{2s\alpha+1}z^{s-1}b^{(z-2\mu_1(P^{\perp}))/2} \\
& \leq B_{2\alpha+1,b,s,t_{2\alpha+1}}2^{s-1}C_{\alpha,b}^s\frac{(\mu_1(P^{\perp}))^{s-1}}{b^{2A\mu_{2\alpha+1}(P^{\perp})+2B\mu_1(P^{\perp})}}\sum_{z=0}^{\infty}b^{(1/2-B)z}z^{(2\alpha+1)s}.
\end{align*}
Since $B-1/2=1/(2\alpha)>0$, the last sum over $z$ is finite. Using Lemma~\ref{lem:propagation} we have
\[ \mu_1(P^{\perp}) \geq m-\lceil t_{2\alpha+1}/(2\alpha+1)\rceil +1,  \]
and so 
\begin{align*}
& 2A\mu_{2\alpha+1}(P^{\perp})+2B\mu_1(P^{\perp}) \\
& \geq 2A((2\alpha+1) m -t_{2\alpha+1}+1)+2B(m-\lceil t_{2\alpha+1}/(2\alpha+1)\rceil +1) \\
& = 2\alpha m-2A(t_{2\alpha+1}-1)-2B(\lceil t_{2\alpha+1}/(2\alpha+1)\rceil -1),
\end{align*}
from which the result of the theorem follows.
\end{proof}
\noindent
The error bound shown in Theorem~\ref{thm:optimal_error_gsy3} also applies to the first $b^m$ points of an order $(2\alpha+1)$ digital $(t_{2\alpha+1},s)$-sequence over $\FF_b$ if $m\geq t_{2\alpha+1}/(2\alpha+1)$, since they can be identified with an order $(2\alpha+1)$ digital $(t_{2\alpha+1},m,s)$-net over $\FF_b$.

\section{Conclusions and outlook}
In this article we have reviewed some of recent results on HoQMC methods with the particular aim to provide a unified picture on how the Walsh analysis enables these developments. The challenge in analyzing either the $L_2$-discrepancy or the worst-case error in a reproducing kernel Hilbert space is that we have to deal with the double sum of the Walsh coefficients. Considering the shift-averaged worst-case error instead, the problem becomes much easier in many cases, but of course, the outcome will remain probabilistic. To obtain a deterministic counterpart of such probabilistic result, exploiting the decay and the sparsity of the Walsh coefficients simultaneously seems to be a reasonable strategy to attack the problem. In fact, as we have seen, both the optimal order $L_2$-discrepancy bound in \cite{DP14} and the optimal order quadrature error bound in \cite{GSY3} are obtained by employing this strategy.

Looking into the future, there are some possible directions for further research as raised below.
\begin{enumerate}
\item \textbf{Choice of an orthonormal basis:} The system of Walsh functions fits quite well with digital nets as emphasized in this article, but is not the only choice. Indeed, recent papers \cite{M15,DHMP17a,DHMP17b,BM18} use the system of \emph{Haar functions} and succeed in generalizing or extending the result of \cite{DP14}. For instance, it was proven in \cite{DHMP17a} that order 2 (instead of order 5) digital sequences achieve the best possible order of $L_2$-discrepancy. Also, prior to the works of \cite{GSY1,GSY2,GSY3}, Hinrichs et al.\ used the system of \emph{Faber functions} to analyze the worst-case error of order 2 digital nets for different function spaces \cite{HMOT15}.  As natural questions from the current status, one may ask ``Can we get better results in numerical integration problems by using the system of Haar/Faber functions? Can we lower the necessary order of digital nets and sequences from $2\alpha+1$ to achieve the best possible error rate?'' We do not have any progress on these questions so far.
\item \textbf{Alternative construction scheme:} Higher order digital nets and sequences can be explicitly constructed through the digit interlacing function (Definition~\ref{def:interlacing} and Lemma~\ref{lem:ho_construction}). Quite recently, it has been shown in \cite{DGY19} that Richardson extrapolation plays an alternative role to the digit interlacing when the class of underlying point sets are restricted to polynomial lattice point sets. Also, the paper \cite{G2} proposed a different usage of Richardson extrapolation in the context of HoQMC methods. It is desirable to have more different options for explicit construction of higher order digital nets and sequences. 
\item \textbf{Universality for various function classes:} One major drawback of higher order digital nets and sequences is that we need to construct point sets or sequences depending on dominating mixed smoothness of the considered function space. Propagation rule (Lemma~\ref{lem:propagation}) says that, once we construct point sets or sequences which work for a certain smoothness $\beta$, they also work for any smaller smoothness $1\leq \alpha <\beta$ but not for larger smoothness. Ideally what we want in practice is point sets or sequences which work for all ranges of smoothness. One straightforward idea is to construct \emph{infinite order} digital nets and sequences and then study their propagation rule. In this line of research, we refer to \cite{SY16,Y17} for the Walsh analysis of infinitely many times differentiable functions, and furthermore, to \cite{MSM14,S14,S15,S16,DGSY17,MOY18} for the relevant literature.
\end{enumerate}

\section*{Acknowledgments}
This article is based on the talk which the first author gave during the discrepancy workshop of RICAM special semester ``Multivariate Algorithms and their Foundations in Number Theory''. He would like to thank the organizers of the workshop, Dmitriy Bilyk, Josef Dick, and Friedrich Pillichshammer, for their kind invitation. The authors sincerely acknowledge their colleague Takehito Yoshiki. Some of the outcomes brought from the collaboration with him play a central part of this article. The work of T.~G. is supported by JSPS Grant-in-Aid for Young Scientists No.~15K20964. The work of K.~S. is supported by Grant-in-Aid for JSPS Fellows No. 17J00466.

\end{document}